\newtheoremstyle{mythm}{1.5ex plus 1ex minus .2ex}{1.5ex plus 1ex
minus .2ex}{\kai}{\parindent}{\song\bfseries}{}{1em}{}
\numberwithin{equation}{section}
\newtheorem{theorem}{Theorem}[section]
\newtheorem{lemma}{Lemma}[section]
\newtheorem{remark}{Remark} [section]
\begin{document}

\title{{\textbf{A Liouville theorem of degenerate elliptic equation and its application}}}
\author{Genggeng Huang\footnote{genggenghuang@fudan.edu.cn}}
\date{}
\maketitle
\begin{center}
(School of Mathematics Sciences,  LMNS,  Fudan University, Shanghai,
China,  200433)
\end{center}

\begin{abstract}
In this paper, we  apply the moving plane method to some degenerate elliptic equations to get a Liouville type theorem.
As an application, we derive the a priori bounds for positive solutions of  some semi-linear
degenerate elliptic equations.
\par Key Words: Degenerate elliptic,  moving plane, characteristic

\end{abstract}

\section{Introduction}\setcounter{section}{1} \setcounter{equation}{0}
\setcounter{theorem}{0}\setcounter{lemma}{0}
\label{intro}
In the present paper, we  study the nonnegative solutions $u(x,y)$ of the
following equation with a constant $a>1$
\begin{equation}\begin{cases}\label{001}
yu_{yy}+au_y+\Delta_x u+u^\alpha=0\text{ in } R^{n+1}_+,n\geq 1,\\
u(x,y)\geq 0,u(x,y)\in C^2(\overline{R^{n+1}_+}),1<\alpha\leq
\frac{n+2a+2}{n+2a-2}.
\end{cases}\end{equation} Notice that no boundary
condition is  imposed on $y=0$ which is the characteristic of
\eqref{001}. As far as I know, I haven't seen any Liouville type theorem concerning \eqref{001}.  Set $x_{n+1}=2\sqrt y$. \eqref{001} changes to \begin{equation}\label{hgg2}
\Delta_{n+1} u+\frac{2a-1}{x_{n+1}}\partial_{n+1}u+u^{\alpha}=0,\text{ in } R^{n+1}_+
\end{equation}  If $a=\frac k2,k\in N^+$, then we have \begin{equation}\begin{cases}
\label{hgg1}
\Delta_{x',\xi}v+v^{\alpha}=0,\text{ in } R^{n+k}\backslash \{\xi_1=0,..,\xi_k=0\}\\
v\geq 0,v\in C(R^{n+k}),1<\alpha\leq
\frac{n+k+2}{n+k-2}.
\end{cases}\end{equation} with $v(x_1,...,x_n,\xi_1,..,\xi_{k})=u(x_1,...,x_n,\sqrt{\xi_1^2+...+\xi^2_k})$. For \eqref{hgg1}, it is quite similar to the following problem except for a codimension hyperplane,\begin{equation}\begin{cases}\label{002}
\Delta u+u^q= \text{ in } R^n, n>2,\\ u(x)\geq 0,\  u(x)\in
C^{2}(R^n),1<q\leq \frac{n+2}{n-2}.
\end{cases}
\end{equation}
The above problem \eqref{002} was  investigated in \cite{GS} and \cite{CGS}. For the subcritical case $1<q<\frac{n+2}{n-2}$, the
 only
nonnegative solution $u(x)$ is  trivial. Whereas for the
critical case $q=\frac{n+2}{n-2}$, $u(x)$ is in  a two parameter family of
functions as
\begin{equation}\label{004}
u_{t,x_0}(x)=\left(\frac{t\sqrt{n(n-2)}}{t^2+|x-x_0|^2}\right)^{\frac{n-2}{2}}.
\end{equation}
There are many extended results of problem \eqref{002} which mainly concern on the higher order case, \begin{equation}\begin{cases}
(-\Delta)^p u=u^q \text{ in } R^n, n>2p,\\ u(x)\geq 0,\  u(x)\in
C^{2p}(R^n),1<q\leq \frac{n+2p}{n-2p}.
\end{cases}
\end{equation}
For the case $p=2$, Lin and Xu got the similar results respectively
in \cite{L} and \cite{X}.  Wei-Xu extended the results to the case
$2\leq 2p\leq n,p\in \mathbb{Z}$ in \cite{WX}. Chen-Li-Ou and Li
 proved the results for the most general case $0<p<\frac n2$ by
 the integral form of the moving plane method(moving sphere
method) respectively in \cite{CLO} and \cite{LYY}. Applying
Chen-Li-Ou's method to systems as in \cite{CL} and \cite{LM}, one
can also get the similar conclusions. Chang and Yang in \cite{CY} also extended this results to manifolds.
\par The main method used in solving problem \eqref{002} is the moving plane method which was first
proposed by Alexandrov \cite{A} and developed by  Serrin \cite{S}, Gidas, Ni and
Nirenberg \cite{GNN,GNN2}. Now moving plane method has been widely used
in study of the symmetry of the positive solutions of many
elliptic partial differential equations and systems. The key point of using the moving plane method in \eqref{002} is the conformal invariant property and the rotation invariant property of \eqref{002}.

 \par In our case, we also use the moving plane method and the conformal invariant property. To do so, we must establish some new maximal principles and overcome the difficult that \eqref{hgg2} is not rotation invariant.

  In this paper,
 we obtain the following results for \eqref{001}.
\begin{theorem}\label{thm1} Let $u(x,y)\geq 0$ be a
solution to \eqref{001} with $a>1$. Then
\begin{itemize}

\item[(1)] for $1<\alpha<\frac{n+2a+2}{n+2a-2},\ u(x,y)\equiv 0$,
\item[(2)] for $\alpha=\frac{n+2a+2}{n+2a-2}$,
$\displaystyle u_{t,x_0}(x,y)=\left(\frac{t\sqrt{(n+2a)(n+2a-2)}}{t^2+4y+|x-x_0|^2}\right)^{
\frac{n+2a-2}{2}}$,
\end{itemize}for some $x_0\in R^n$ and $t\geq 0$.
\end{theorem}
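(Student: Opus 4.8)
The plan is to implement the moving plane method directly on the degenerate equation \eqref{001}, or equivalently on the form \eqref{hgg2}
$$
\Delta_{n+1}u+\frac{2a-1}{x_{n+1}}\,\partial_{n+1}u+u^\alpha=0,\qquad x_{n+1}>0,
$$
treating $x_{n+1}=2\sqrt y$ as the natural radial-type variable in which the operator becomes a Laplacian with a Hardy-type drift. First I would record the conformal structure: the equation \eqref{hgg2} is, up to the weight $x_{n+1}^{2a-1}$, the Euler–Lagrange equation of a weighted Dirichlet energy, and the Kelvin transform $u\mapsto |z|^{-(n+2a-2)}u(z/|z|^2)$ (with $z=(x,x_{n+1})$, and $N:=n+2a$ playing the role of dimension) maps solutions of the critical equation to solutions, and for the subcritical exponent produces an equation with a favorable sign. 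This is exactly why the critical exponent is $\frac{N+2}{N-2}=\frac{n+2a+2}{n+2a-2}$ and why the classifying family has the Aubin–Talenti shape with $|z|^2=4y+|x-x_0|^2$.

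The core analytic step is to develop the maximum principles advertised in the introduction for the degenerate operator $L_au:=\Delta_{n+1}u+\frac{2a-1}{x_{n+1}}\partial_{n+1}u$ on half-spaces $\{x_1<\lambda\}$ (moving planes in one of the $x$-directions, which are directions of genuine translation invariance) and, crucially, on the "horizontal" slabs/half-spaces obtained after a Kelvin transform, where the pole sits on the characteristic hyperplane $\{x_{n+1}=0\}$. Since no boundary condition is imposed on $\{y=0\}$, I would first prove that any $C^2(\overline{R^{n+1}_+})$ solution automatically satisfies the Neumann-type condition forced by the equation: multiplying \eqref{hgg2} by $x_{n+1}^{2a-1}$ and integrating shows $x_{n+1}^{2a-1}\partial_{n+1}u\to 0$ as $x_{n+1}\to 0$ (here $a>1$ is used), so the characteristic boundary behaves like an interior reflection boundary and the even extension in $x_{n+1}$ (equivalently, the lift \eqref{hgg1} when $2a\in\mathbb Z$) is legitimate for comparison purposes. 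With this in hand, the moving plane argument is the standard two-stage scheme: (i) start the planes from infinity using the decay of $u$ — obtained from a Kelvin transform plus the removable-singularity/maximum-principle analysis at the pole, giving $u(x,y)=O((|x|^2+y)^{-(n+2a-2)/2})$ — and show the planes can be moved a positive distance; (ii) show the critical position is either where the reflected function touches $u$ (forcing symmetry) or where it can be pushed to infinity (forcing the subcritical solution to be trivial).

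The main obstacle, and the place I would spend the most care, is the lack of rotational invariance of \eqref{hgg2}: one cannot freely rotate the plane's normal, so after moving planes in every $x_i$-direction one only concludes $u=u(|x-x_0|,y)$, not full radial symmetry in $(x,y)$. To recover the missing symmetry one must perform a moving-plane (or moving-sphere) argument in the $x_{n+1}$-direction \emph{after a Kelvin transform centered on the characteristic hyperplane}, and this is exactly where a new maximum principle is needed: the relevant domains touch $\{x_{n+1}=0\}$ where the operator degenerates and the drift coefficient $\frac{2a-1}{x_{n+1}}$ blows up. I would handle this by working with the weight $w=x_{n+1}^{2a-1}$, writing $L_a$ in divergence form $w^{-1}\operatorname{div}(w\nabla\cdot)$, and proving a Hopf-type lemma and a narrow-domain maximum principle for $w$-weighted subsolutions, using that $a>1$ makes $w$ integrable and the degeneracy one-sided. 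Once symmetry in all $n+1$ directions plus the sliding argument pin down the solution, in the subcritical case the Kelvin transform contradicts the homogeneity unless $u\equiv 0$, and in the critical case the combination of the symmetries and the invariance of the equation under the Kelvin transform forces $u=u_{t,x_0}$ with $|z|^2=t^2+4y+|x-x_0|^2$; a direct substitution verifies the constant $\sqrt{(n+2a)(n+2a-2)}$.
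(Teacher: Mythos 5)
Your overall scaffolding---the Kelvin transform $v=|x|^{-(n+2a-2)}\bar u(x/|x|^2)$ with $N=n+2a$ as effective dimension, the asymptotic expansion at infinity, Hopf-type and strong maximum principles for the degenerate operator acting on the evenly extended solution (note that $\partial_{n+1}\bar u(x',0)=0$ follows immediately from $u\in C^2(\overline{R^{n+1}_+})$, since $\partial_{n+1}\bar u=\tfrac{x_{n+1}}2 u_y$; no integration argument is needed), and a removable-singularity-type lower bound near the pole---is essentially the content of the paper's Lemmas 2.1--2.5. The genuine gap is precisely at the step you yourself flag as crucial: recovering the symmetry that mixes the $x$-directions with $x_{n+1}$. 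Moving the plane in the $x_{n+1}$-direction, with or without a Kelvin transform centered on the characteristic hyperplane, can at best produce reflection symmetry about some plane $\{x_{n+1}=\mu\}$; evenness of the extension already gives symmetry about $\{x_{n+1}=0\}$, and two distinct reflection planes would force periodicity in $x_{n+1}$ and hence a contradiction, so such an argument can only return $\mu=0$, i.e.\ information you already have. But knowing $u=u(|x-x_0|,y)$ together with evenness in $x_{n+1}$ does not imply that $u$ is a function of $4y+|x-x_0|^2$ alone, which is what both the subcritical triviality and the critical classification require. Worse, reflections $x_{n+1}\mapsto 2\mu-x_{n+1}$ with $\mu\neq 0$ do not preserve the operator because of the drift $\frac{2a-1}{x_{n+1}}\partial_{n+1}$, so the standard comparison between $v$ and its reflection is not even available in that direction, and no weighted Hopf lemma for $w=x_{n+1}^{2a-1}$ repairs this structural obstruction.

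The paper closes this gap with a device absent from your plan: the dimension-raising substitution $\bar v(x',x_{n+1},x_{n+2})=v(x',\sqrt{x_{n+1}^2+x_{n+2}^2})$, which transfers the drift to the new variable, yielding the coefficient $\frac{2a-2}{x_{n+2}}$ (this is where $a>1$ is used), so that the lifted equation is rotationally invariant in all of the $(x',x_{n+1})$-variables. Moving planes can then be run in every direction of that subspace, giving joint radial symmetry of $v$ about a point with vanishing $(n+1)$-st coordinate. In the subcritical case ($\tau>0$) the singular factor $|x|^{-\tau}$ pins the symmetry center at the pole, and repeating the argument with Kelvin transforms centered at arbitrary $b$ with $b_{n+1}=0$ forces $\bar u$ to be constant, hence $\equiv 0$; in the critical case one obtains radial symmetry of $\bar u$ about some point on $\{x_{n+1}=0\}$ (treating separately the cases $\lambda'=0$ and $\lambda'\neq 0$, the latter using that $v$ is then regular at the origin), after which the classification follows the Chen--Li--Ou type argument. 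Without this lifting trick, or some substitute that genuinely produces the mixed rotational symmetry rather than another reflection in $x_{n+1}$, your proposal does not close.
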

It is easy to see that  for $a=\frac k2,k\in N^+$, Theorem \ref{thm1} is exactly the result of \eqref{002}  in $R^{n+k}$. For general $a>1$, we may consider Theorem \ref{thm1} as the extension of the results of \eqref{002} to $R^{n+2a}$ with real dimension $n+2a$.
 \par As an application of
Theorem \ref{thm1}, we also derive a priori bounds for positive solutions of some
semi-linear degenerate elliptic  equations which arising from the study of geometry,
\begin{equation}\label{601}
a^{ij}(x)\partial_{ij}u+b^i(x)\partial_i u +f(x,u)=0,\text{ in
}\Omega\subset\subset R^2.
\end{equation}  Let $\phi\in C^2(\mathcal{N}(\partial \Omega))$ be the defining function of $\partial
\Omega$, namely,
\begin{equation}\label{602}\phi|_{\partial\Omega}=0,
\nabla\phi|_{\partial \Omega}\neq 0,\phi>0\text{ in  } \Omega\cap\mathcal{N}(\partial \Omega)
\end{equation}where $\mathcal{N}(\partial \Omega)$ is a neighborhood of $\partial \Omega$. Also, suppose that \begin{equation}\label{666}(a^{ij})>0 \text{ in } \Omega,
a^{ij}(x)\partial_i\phi\partial_j\phi=0,\nabla(a^{ij}\partial_i\phi\partial_j\phi)\neq 0
\text{ on }\partial \Omega
\in C^{2}
\end{equation} and that near $\partial\Omega$ for the  eigenvalues of $(a^{ij})$  $\lambda_1$ and $\lambda_2$, there hold, for some constant $c_0$,
\begin{equation}\label{603} \lambda_1\geq
c_0>0,\lambda_{2}=m(x)\phi, 0<m(x)\in C(\bar \Omega).
\end{equation}\begin{theorem}\label{thm501}Let \eqref{602}, \eqref{666} and \eqref{603} be
fulfilled.
Suppose that $0<u\in C^2(\Omega)\cap L^{\infty}(\Omega)$ solves
\eqref{601} and that  $a^{ij}\in
C^{2}(\bar{\Omega}),b^i\in C^{1}(\bar{\Omega})$ and $f(x,t)\in C(\bar{\Omega}\times[0,\infty))$
\begin{equation}
\label{604}\underset{t\rightarrow
\infty}{\lim}\frac{f(x,t)}{t^\alpha}=h(x), \text{ uniformly for some
} 1<\alpha<\frac{3+2a}{2a-1},
\end{equation} where
$0<h(x)\in C(\bar{\Omega})$,\begin{equation}\label{a101}a=\displaystyle \underset{\partial\Omega}{\sup}\frac{b^i\partial_i\phi-\partial_ja^{ij}\partial_{i}
\phi}{\partial_ka^{ij}\partial_i\phi\partial_j\phi\phi^k}+1,
b\triangleq\underset{\partial\Omega}{\inf}\displaystyle\frac{b^i\partial_i\phi-\partial_ja^{ij}\partial_{i}
\phi}{\partial_{k}a^{ij}\partial_i
\phi\partial_j\phi\phi^k}>1,\phi^k=\frac{\partial_k\phi}{|\nabla \phi|^2}.
\end{equation}
  Then it follows that \begin{equation}\label{888}|u|_{L^\infty}\leq
C.\end{equation}

\end{theorem}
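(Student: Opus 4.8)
The plan is to run a blow-up (rescaling) argument of Gidas--Spruck type, arranged so that at boundary points the rescaled limit solves the model equation \eqref{001} with $n=1$, whence Theorem \ref{thm1} applies, while at interior points the limit solves the plain semilinear equation in $R^2$. Suppose \eqref{888} fails. Then there is a sequence of solutions $u_k$ of \eqref{601} (with the same structural constants) and points $x_k\in\overline\Omega$ with $M_k:=u_k(x_k)\ge\frac12\sup_\Omega u_k\to\infty$. Since $u_k$ need not attain its supremum and no condition is imposed on the characteristic $\partial\Omega$, I would first regularize the choice of $x_k$ by a standard doubling/concentration argument, producing an expanding region around $x_k$ on which $u_k$ is comparable to $M_k$ in the natural scaled sense. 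Put $\mu_k:=M_k^{-(\alpha-1)/2}\to0$ and $t_k:=\phi(x_k)\ge0$, with $\phi$ the defining function from \eqref{602}; recall from \eqref{603} that near $\partial\Omega$ the ellipticity of \eqref{601} degenerates precisely at rate $\phi$ in the $\lambda_2$-direction. The argument splits according to whether $t_k\gg\mu_k^2$ or $t_k\lesssim\mu_k^2$.

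In the first regime ($t_k\gg\mu_k^2$, which covers in particular $\mathrm{dist}(x_k,\partial\Omega)\ge\delta_0>0$), one normalizes $(\aij(x_k))$ to the identity by a linear change of coordinates --- legitimate since its eigenvalues are $\lambda_1\sim1$ and $\lambda_2=m(x_k)t_k$ --- after which $\partial\Omega$ lies at distance $\gtrsim\sqrt{t_k}$ and the isotropic rescaling $v_k(z):=M_k^{-1}u_k(x_k+\mu_k z)$ is defined on balls of radius $\gtrsim\sqrt{t_k}/\mu_k\to\infty$. Using \eqref{604}, the bound $f(x,t)\le C(1+t^\alpha)$, and the identity $\mu_k^2M_k^{\alpha-1}=1$, and noting the first order part $b^i\partial_i$ carries a vanishing factor $\mu_k$, interior elliptic estimates give $v_k\to v$ in $C^2_{loc}(R^2)$ with $0\le v\le 2$, $v(0)=1$, and $\Delta v+h(x_\infty)v^\alpha=0$ in $R^2$; after a dilation absorbing $h(x_\infty)>0$ this contradicts the classical Liouville theorem in $R^2$ (\cite{GS}).

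In the second regime ($t_k\lesssim\mu_k^2$, so $x_\infty:=\lim x_k\in\partial\Omega$), choose boundary (Fermi-type) coordinates $(s,t)$ near $x_\infty$ flattening $\partial\Omega$ to $\{t=0\}$, with $t$ comparable to $\phi$. By \eqref{666} and \eqref{603} the operator in \eqref{601} then takes the form $c(x)\,t\,\partial_{tt}+\beta(x)\,\partial_t+\gamma(x)\,\partial_{ss}$ plus terms of lower order for the scaling below, with $c,\gamma>0$. Applying the \emph{anisotropic} rescaling $v_k(x,y):=M_k^{-1}u_k\bigl(s_k+\mu_k x,\ \mu_k^2 y\bigr)$: the principal part is preserved, the surviving coefficients converge to their values at $x_\infty$, the tangential first order term scales away, and $\mu_k^2M_k^{\alpha-1}=1$ with \eqref{604} handles the nonlinearity. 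After normalizing $c(x_\infty),\gamma(x_\infty),h(x_\infty)$ to $1$ one obtains $v_k\to v$ in $C^2_{loc}(\overline{R^{1+1}_+})$ with $v\ge0$, $v\in C^2$, $v\not\equiv0$, solving
\begin{equation*}
y\,v_{yy}+A\,v_y+v_{xx}+v^\alpha=0\quad\text{in }R^{1+1}_+,
\end{equation*}
where $A=A(x_\infty)>1$ is the model first-order coefficient determined by \eqref{666}, \eqref{603} and \eqref{a101} (the point of the condition $b>1$ in \eqref{a101} is exactly to guarantee $A>1$, so Theorem \ref{thm1} applies, while $A\le a$). Since $s\mapsto\frac{3+2s}{2s-1}$ is decreasing on $(1/2,\infty)$, assumption \eqref{604} gives $1<\alpha<\frac{3+2a}{2a-1}\le\frac{3+2A}{2A-1}$, i.e. $\alpha$ is strictly subcritical for \eqref{001} in dimension $n=1$ with parameter $A$. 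Theorem \ref{thm1}(1) then forces $v\equiv0$, contradicting $v\not\equiv0$. Hence \eqref{888} holds.

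The main obstacle is confined to the second regime, and has three parts. First, one needs a priori estimates for the rescaled solutions that are \emph{uniform in $k$ and valid up to the characteristic} $\{y=0\}$, so that the limit $v$ exists in $C^2_{loc}(\overline{R^{1+1}_+})$ and lies in the class for which Theorem \ref{thm1} was proved; this requires the interior-and-boundary regularity theory for the degenerate operator (developed earlier in the paper) to be compatible with the anisotropic dilation $(s,t)\mapsto(\mu_k x,\mu_k^2 y)$ and to survive the perturbation by the lower-order and curvature terms. Second, one must carry out the change-of-variables computation showing that the rescaled operators converge to $y\partial_{yy}+A\partial_{y}+\partial_{xx}$ with the specific $A$ read off from \eqref{a101} --- this is where the somewhat unusual coefficient in \eqref{a101} is pinned down (the term $-\partial_j\aij\partial_i\phi$ arising from rewriting $\aij\pij$ after the coordinate change, and the shift by $1$ from the transformation of the normal variable), and where one checks that any cross term $\partial_{st}$ carries enough powers of $t$ to scale away. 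Third, one must confirm the limit is nontrivial, i.e. that after the doubling normalization $v_k$ stays bounded below at the rescaled center, so that $v\not\equiv0$ is genuine and not an artifact of the rescaling. With these in hand the contradiction with Theorem \ref{thm1}(1) (and with \cite{GS} in the interior regime) is immediate.
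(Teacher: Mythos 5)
Your overall strategy coincides with the paper's: argue by contradiction, blow up at near-maximum points, split according to whether the concentration point stays interior or goes to $\partial\Omega$, use the anisotropic scaling $(\mu_k,\mu_k^2)$ in coordinates adapted to $\phi$ near the boundary, identify the limit with the model problem \eqref{001} (here $n=1$) with first-order coefficient lying between $b+1$ and $a$, and conclude from Theorem \ref{thm1} because $s\mapsto\frac{2s+3}{2s-1}$ is decreasing. Your handling of the intermediate regime $\mu_k^2\ll\phi(x^k)\rightarrow 0$ (the paper's Case 2.1) is a legitimate variant: you normalize $(a^{ij}(x^k))$ to the identity and blow up isotropically, while the paper stays in the frame $(y_1,y_2=\phi)$ and uses the substitution $q_2=2\sqrt{p_2+y_2^k/\mu_k^2}-2\sqrt{y_2^k/\mu_k^2}$ in \eqref{402}; both routes produce a uniformly elliptic semilinear limit in $R^2$ and a contradiction with the Gidas--Spruck theorem, and your identification of the boundary coefficient (the term $-\partial_ja^{ij}\partial_i\phi$ and the shift by $1$, i.e.\ Lemma \ref{lem401}) is consistent with \eqref{a101}.

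The genuine gap is precisely the step you defer as ``the main obstacle'': in the regime $\phi(x^k)\lesssim\mu_k^2$ you need convergence of $v^k$ up to the characteristic $\{p_2=0\}$ strong enough to keep $v(0,c)\ge\frac12$ and to place the limit in $C^2(\overline{R^2_+})$, the class in which Theorem \ref{thm1} is stated. This cannot be dispatched by simply invoking ``the regularity theory developed earlier in the paper'': Theorem \ref{thm601} (and Lemma \ref{lem602} behind it) requires as input uniform local $L^2$ bounds on $v^k_{p_1}$ and $p_2^{1/2}v^k_{p_2}$, which do not follow from the $L^\infty$ normalization; the paper has to prove a new, $k$-uniform weighted energy estimate (Lemma \ref{lem402}, obtained by testing \eqref{403} against $\psi_{r,\epsilon}v^k$ with the cutoff $\eta_\epsilon$ in $p_2$) exactly to make the bootstrap start. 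Moreover, the applicability of that regularity theory is where the hypothesis $b>1$ in \eqref{a101} really enters: by Lemma \ref{lem401} the coefficient of $\partial_{p_2}$ in \eqref{402} satisfies $\bar b^2>2$ on the characteristic (this is the $+1$ shift), and Theorem \ref{thm601} demands this coefficient to exceed $2$, not merely $1$. So $b>1$ is not only ``to guarantee $A>1$ so Theorem \ref{thm1} applies''; without the resulting bound $\bar b^2>2$ the up-to-boundary compactness, the $C^2(\overline{R^2_+})$ membership of the limit, and the pointwise lower bound $v(0,c)\ge\frac12$ are not justified by anything in your outline, and these are the substantive content of the paper's Case 2.2.
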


\begin{remark}
Define $$g(x)=\frac{b^i\partial_i\phi-\partial_ja^{ij}\partial_{i}
\phi}{\partial_ka^{ij}\partial_i\phi\partial_j\phi\phi^k}\text{ on }\partial \Omega \text{ where } a^{ij}\partial_i\phi\partial_j\phi=0.$$ The invariance of $g(x)$ is proved in \cite{H}. The numerator of $g(x)$ is the well-know Fichera number. The concept of Fichera number is very important when we deal with degenerate elliptic problems with boundary characteristic degenerate. It indicates whether we should impose boundary condition in such case. This fact was first observed by M.V.Keldy\text{$\check{s}$} in \cite{Kel} and developed by Fichera in \cite{Fic1,Fic2}. The Fichera number also affects the regularities of the solutions up to the boundary, see \cite{H}. For more details of Fichera number, refer to \cite{O}.
\end{remark}
\begin{remark}
It might be  hard to understand that the nonlinearity of $f(x,u)$ should be related to $g(x)$. We can take equation \eqref{001} for instance to explain why this happens. In this situation, $\phi=y,f=u^{\alpha}$. It is easy to see $g(x,0)=a-1$ by a direct computation. Theorem \ref{thm1} tells us that the existence of non-trivial positive solution depends on the nonlinear power $\alpha$ which is involved in $a$. When we use blow up method to get a priori estimates of  \eqref{601}, one of the limit cases is \eqref{001} as the blow up point approaching the boundary. It is nature that the the nonlinearity of $f(x,u)$ should be related to $g(x)$ if we want to get the a priori bounds.
\end{remark}
The proof of Theorem \ref{thm501}  mainly follows the blow up method used in \cite{GS2}. The mainly difficulty we encounter is the case when the blow-up point approach to the boundary. This case becomes complicated with the degeneracy of the equation on the boundary and without boundary condition. We  should take a suitable transformation of coordinates to make the limit equation exists and establish some regularities estimates up to the boundary to guarantee the point-wise convergence.

The present paper is organized as follows. In Section 2, we
establish some lemmas which are similar to Lemma 2.3 and Lemma 2.4 in \cite{CGS},
and  necessary for utility of the moving plane method. In
Section 3, we shall use the moving plane method to prove Theorem
\ref{thm1}. In Section 4,  as an application of Theorem \ref{thm1}, we derive a
priori bounds for positive solutions of  some semi-linear degenerate
elliptic equations.

\section{Preliminary Results}
\label{sec:1}
In this section we
collect some  preliminary results which will be needed for our later
analysis.
\par Suppose that $u$ solves \eqref{001}.
Set $x_{n+1}=2\sqrt y$ and
$\bar{u}(x_1,...,x_{n+1})=u(x_1,...,x_n,\frac {x_{n+1}^2}4)$. Then
\eqref{001} is reduced to
\begin{equation}\label{103}
\Delta_{n+1} \bar{u}+\frac{2a-1}{x_{n+1}}
\partial_{n+1}\bar{u}+\bar{u}^\alpha=0 \text{ in } R^{n+1}_+.
\end{equation} Noting that $u\in C^2(\overline{R^{n+1}_+})$, we
must have
$$\partial_{n+1}\bar{u}=\frac{x_{n+1}}2u_y\Rightarrow \partial_{n+1}
\bar{u}(x',0)=0\text{ where }x'=(x_1,...,x_n).$$  This allows us to
extend $\bar{u}$ to the lower half-space by
$$\bar{u}(x',x_{n+1})=\bar{u}(x',-x_{n+1})\text{ for } x_{n+1}<0,$$
such that $\bar{u}(x)\in C^2(R^{n+1})$ with
$\partial_{n+1}\bar{u}(x',0)=0$.\par Consider the following
elliptic operator
$$L(u)=\displaystyle \sum_{i=1}^{n+1} a_{ij}(x)\partial_{ij}u+\sum_{i=1}^n
b_i(x) \partial_iu+\frac{a(x)}{x_{n+1}}\partial_{n+1}u.$$ All the
coefficients $a_{ij}(x),b_i(x),a(x)\in C(R^{n+1})$, $a(x)\geq  0$
and $(a_{ij})$ is a positive definite matrix. Then we shall have
the following two lemmas.
\begin{lemma}\label{lem001} Suppose that $u\in
C^2(B_1)\cap C(\bar{B}_1)$ with $\partial_{n+1}u(x',0)=0$ satisfies
that\begin{equation*} -L(u)\geq 0 \text{ in } B_1.
\end{equation*} Then either $u$ is a constant or $u$ can not attain its minimum in
$B_1$.
\end{lemma}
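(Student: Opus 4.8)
The statement is a strong maximum principle for the degenerate operator $L$ on the ball $B_1$, under the hypothesis that $u$ satisfies the Neumann-type condition $\partial_{n+1}u(x',0)=0$ on the characteristic hyperplane $\{x_{n+1}=0\}$. The plan is to reduce to the classical strong maximum principle of Hopf on open subsets where the operator is uniformly elliptic, and then handle separately the only problematic locus, namely points of the form $(x'_0,0)$ on the degeneracy set. First I would dispose of the easy region: on the open set $B_1\cap\{x_{n+1}\neq 0\}$ the coefficient $a(x)/x_{n+1}$ is continuous and bounded, so $L$ is a uniformly elliptic operator with bounded coefficients there; by Hopf's strong maximum principle, if $u$ attains its minimum at an interior point of $B_1\cap\{x_{n+1}>0\}$ or of $B_1\cap\{x_{n+1}<0\}$, then $u$ is constant on that half-ball, and by continuity and the symmetry/connectedness through the hyperplane the conclusion propagates. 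So the whole issue is: can $u$ attain an interior minimum at a point $p=(x'_0,0)$ with $|x'_0|<1$ without being constant near $p$?

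The key step is a boundary-point (Hopf-lemma type) argument adapted to the degeneracy. Suppose $u(p)=m:=\min_{B_1}u$ and $u\not\equiv m$. Work in the open upper piece $B_r^+:=B_r(p)\cap\{x_{n+1}>0\}$ for small $r$. Since $u$ is not constant, I may assume (shrinking $r$, using the already-established strong maximum principle in the interior of the half-ball) that $u>m$ somewhere in $B_r^+$, hence by the classical Hopf lemma applied at the flat portion of the boundary $\{x_{n+1}=0\}$ one would get $\partial_{n+1}u(p)<0$ — but this must be reconciled with the degenerate term. The honest way is to build a barrier: I would look for a supersolution of $L$ of the form $w(x)=m+\eta\big(1-\psi(x)\big)$ where $\psi$ is a suitable radial-type function vanishing at $p$, engineered so that $Lw\le 0$ in an annular region $A=B_r^+\setminus \overline{B_\rho^+}$ despite the sign and blow-up of $a(x)/x_{n+1}$. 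Because $a(x)\ge 0$ and we are on $x_{n+1}>0$, the term $\frac{a(x)}{x_{n+1}}\partial_{n+1}w$ has a favorable sign when $\partial_{n+1}w\le 0$, which is exactly what an appropriately chosen barrier decreasing away from the inner sphere will give; the standard computation for the exponential barrier $\psi=e^{-\gamma|x-q|^2}-e^{-\gamma r^2}$ (with center $q$ chosen appropriately) then goes through, the only new point being to check the degenerate term does not spoil the sign — and it does not, precisely because $a\ge0$.

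With the barrier in hand the comparison principle (which follows from Lemma \ref{lem001}'s own weak form, or from the elliptic maximum principle on the non-degenerate region together with the Neumann condition on the flat face) gives $u\ge w$ in $A$, and since $u(p)=w(p)=m$ with $w$ strictly increasing as one leaves $p$ upward, one obtains a contradiction with $\partial_{n+1}u(p)=0$: differentiating $u-w\ge0$ at $p$ along the inward normal $+e_{n+1}$ forces $\partial_{n+1}u(p)\ge \partial_{n+1}w(p)>0$, contradicting the hypothesis $\partial_{n+1}u(x',0)=0$. Hence no such nonconstant $u$ can attain its minimum at a point of the degeneracy set either, which completes the dichotomy. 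The main obstacle I anticipate is constructing the barrier so that the singular first-order coefficient $a(x)/x_{n+1}$ is controlled uniformly up to $\{x_{n+1}=0\}$; the resolution is to exploit the sign condition $a(x)\ge 0$ together with the monotonicity of the barrier in the $x_{n+1}$-direction, so that the dangerous term is actually helpful rather than harmful.
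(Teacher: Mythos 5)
Your overall strategy is the one the paper uses: away from $\{x_{n+1}=0\}$ the operator is locally uniformly elliptic and the classical strong maximum principle applies, so the only real issue is a minimum attained at a point $p$ of the hyperplane; there one runs a Hopf-type barrier argument in an annular region whose outer sphere is tangent to $\{x_{n+1}=0\}$ at $p$, exploiting $a(x)\geq 0$ so that the singular drift $\frac{a(x)}{x_{n+1}}\partial_{n+1}$ has a harmless (indeed helpful) sign near the hyperplane, and finally one contradicts the hypothesis $\partial_{n+1}u(x',0)=0$. The paper does exactly this with the tangent ball $B_r(P)$, $P=(0',r)$, the annulus $\Sigma=B_r(P)\setminus B_{r/2}(P)$, and the barrier $h=1-e^{-\beta[|x'|^2+(x_{n+1}-r)^2-r^2]}$ added to $u$.

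However, the bookkeeping in your barrier step is internally inconsistent as written, and the comparison step fails literally in the form you state it. You take $w=m+\eta(1-\psi)$ with $\psi(p)=0$, which gives $w(p)=m+\eta$, yet later you use $u(p)=w(p)=m$. More seriously, you require $Lw\leq 0$ (a supersolution) and then invoke a comparison to get $u\geq w$ from boundary data; but the hypothesis is $Lu\leq 0$, so $L(u-w)=Lu-Lw$ has no sign and the maximum principle gives nothing. Likewise, the favorable-sign requirement $\partial_{n+1}w\leq 0$ near the bottom is incompatible with the inequality $\partial_{n+1}w(p)>0$ you need for the final contradiction. The consistent version is either (i) take $w=m+\eta\psi$ with $\psi$ the standard Hopf barrier of the tangent ball and arrange $L\psi\geq 0$ in the annulus (the singular term is then favorable precisely because $\partial_{n+1}\psi>0$ near the tangent point and $a\geq 0$), so that $L(u-w)\leq 0$, the minimum principle yields $u\geq w$, and hence $\partial_{n+1}u(p)\geq \partial_{n+1}w(p)>0$; or (ii) as the paper does, add to $u$ a function $\epsilon h$ with $Lh\leq -c_0<0$, $h=0$ on the outer sphere and $\partial_{n+1}h(p)<0$, conclude $u+\epsilon h\geq u(p)$ in the annulus and therefore $\partial_{n+1}u(p)\geq -\epsilon\,\partial_{n+1}h(p)>0$. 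With the orientation of the barrier fixed in this way (and with a word on why $u>\min u$ on the inner sphere, which both you and the paper use), your argument coincides with the paper's proof.
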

\begin{proof}It suffices to prove the second case. Without loss of
generality, we may assume that $u$ attains its minimum at the
origin.

Denote $$B_r(P)=\{x\in B_1\big|x'^2+(x_{n+1}-r)^2\leq r^2\}, r<\frac
12, \text{ where }P=(0',r)\in R^{n+1}_+.$$ $B_{\frac r2}(P)$ has the
same center as $B_{r}(P)$ but half radius. Set
$\Sigma=B_r(P)\backslash B_{\frac r2}(P)$. We consider
$$h(x)=1-e^{-\beta[x'^2+(x_{n+1}-r)^2-r^2]} \text{ in }  \Sigma.$$
 Then we have
\begin{eqnarray}\label{101}
L(h)&=&\displaystyle \sum_{i=1}^{n+1} a_{ij}(x)\partial_{ij}
h+\sum_{i=1}^n
b_i(x) \partial_i h+\frac{a(x)}{x_{n+1}}\partial_{n+1} h\nonumber\\
&\leq &e^{-\beta[x'^2+(x_{n+1}-r)^2-r^2]}\left\{
-c_1\beta^2(x'^2+(x_{n+1}-r)^2)+c_2\beta(1+r)-\frac
{2a(x)\beta r}{x_{n+1}}\right\}\nonumber\\
&\leq &-c_0 \text{ in } \Sigma, \    \   \text{ since } a(x)\geq 0.
\end{eqnarray}
for some positive constant $c_0>0$ if we take $\beta$ large enough.
 Now let $v=u+\epsilon h$, then $Lv<0$. This implies that $v$ must
attain its minimum on the boundary of $\Sigma$. Consider $v$ on
the boundary of $\Sigma$,

(i) on $\partial B_r(P)$,  noting that $h|_{\partial B_r(P)}=0$,
$u|_{\partial B_r(P)}\geq u(0)$, then we have $$u+\epsilon
h|_{\partial B_r(P)}\geq u(0).$$

(ii) on $\partial B_{\frac r2}(P)$, there exists $\delta>0$ such
that $u|_{\partial B_{\frac r2}(P)}\geq u(0)+\delta$. Thus we can
choose $\epsilon$ small enough such that$$u+\epsilon h|_{\partial
B_{\frac r2}(P)}\geq u(0)+\frac{\delta}{2}.$$ This means $u+\epsilon
h\geq u(0)$ in $\Sigma$, which implies
$$
\partial_{n+1}u(0)\geq -\epsilon \partial_{n+1}h(0)>0.$$ This contradicts to
$\partial_{n+1}u(x',0)=0$. This ends the proof of the present lemma.
\end{proof}
\begin{lemma}\label{lem002}
If $u(x)\in C^2(B_1)\cap C^1(\bar{B}_1)$ with
$\partial_{n+1}u(x',0)=0$ satisfies that\begin{equation} -L(u)\geq 0
\text{ in } B_1.
\end{equation}
If $u$ attains its minimum at $x^0\in \partial B_1$, then either
$u\equiv const$ or $$-\frac{\partial u}{\partial n}|_{x=x^0}>0,\ n
\text{ is the outward normal to $\partial B_1$ at } x^0.$$
\end{lemma}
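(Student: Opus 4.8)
The plan is to adapt the classical Hopf boundary point argument, placing the auxiliary barrier so that the singular first order term $\frac{a(x)}{x_{n+1}}\partial_{n+1}$ stays bounded on the region where the barrier lives. Since we are in the second alternative, $u$ is not a constant, so Lemma \ref{lem001} applies and gives $u(x)>u(x^0)=\min_{\overline{B}_1}u$ for every $x\in B_1$; thus the minimum is attained only at the boundary point $x^0$, and what remains is to get the strict sign of $\partial u/\partial n$ at $x^0$.

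I would write $n=x^0$ for the unit outer normal to $B_1$ at $x^0$ and, for small $\rho>0$, take the interior tangent ball $B_\rho(P)$ with center $P=(1-\rho)x^0$ and the annulus $\Sigma=B_\rho(P)\setminus\overline{B_{\rho/2}(P)}$; here $\overline{B_\rho(P)}$ meets $\partial B_1$ only at $x^0$, while $\overline{B_{\rho/2}(P)}$ is a compact subset of $B_1$. On $\Sigma$ I would use the barrier $h(x)=1-e^{-\beta(|x-P|^2-\rho^2)}$, i.e. the one of \eqref{101} recentered at $P$. There are two cases. If $x^0_{n+1}\neq 0$, then choosing $\rho$ small keeps $\overline{B_\rho(P)}$ inside the half space $\{x_{n+1}x^0_{n+1}>0\}$, where $L$ is uniformly elliptic with bounded coefficients, and the computation behind \eqref{101} gives $Lh\le -c_0<0$ on $\Sigma$ for $\beta$ large. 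If $x^0_{n+1}=0$, then $P_{n+1}=0$, and this is exactly the point of centering on the hyperplane: the singular term becomes $\frac{a(x)}{x_{n+1}}\partial_{n+1}h=2\beta\,a(x)\,e^{-\beta(|x-P|^2-\rho^2)}$, which is bounded on $\overline\Sigma$, while $\partial_{n+1}h(x',0)=0$, so $Lh$ extends continuously across $\{x_{n+1}=0\}$; one again obtains $Lh\le -c_0<0$ on $\Sigma$ for $\beta$ large, the second order term $4\beta^2 a_{ij}(x_i-P_i)(x_j-P_j)\ge \beta^2\lambda_0\rho^2$ dominating everything else.

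Then I would set $v=u+\epsilon h$, so $Lv\le -\epsilon c_0<0$ on $\Sigma$ (on $\Sigma\cap\{x_{n+1}=0\}$ read off the continuous extension of $L$, using $\partial_{n+1}u(x',0)=0$), and argue as in Lemma \ref{lem001} that $v$ has no interior minimum in $\Sigma$: at such a point $z$ one has $\nabla v(z)=0$, $D^2v(z)\ge 0$, the first order terms drop out, and (if $z_{n+1}=0$) the singular term limits to $a(z',0)\partial_{n+1}^2v(z)\ge 0$, forcing $Lv(z)\ge 0$, a contradiction. Hence $\min_{\overline\Sigma}v$ is attained on $\partial\Sigma$: on $\partial B_\rho(P)$ one has $h=0$, so $v=u\ge u(x^0)$; on $\partial B_{\rho/2}(P)$ one has $u\ge u(x^0)+\delta$ for some $\delta>0$ by compactness, so $v\ge u(x^0)$ for $\epsilon$ small. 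Therefore $v\ge v(x^0)$ throughout $\overline\Sigma$, whence $\partial_n v(x^0)\le 0$, while $h(x^0)=0$ gives $\partial_n h(x^0)=n\cdot 2\beta(x^0-P)=2\beta\rho|x^0|^2=2\beta\rho>0$; subtracting, $-\partial u/\partial n(x^0)\ge 2\epsilon\beta\rho>0$, which is the claim.

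The main obstacle is the case $x^0_{n+1}=0$: there the outer normal $n=x^0$ is tangent to the characteristic hyperplane $\{x_{n+1}=0\}$, so no interior tangent ball can avoid the degeneracy and the barrier region must straddle it. The remedy is exactly the one above — center the barrier ball on the hyperplane so that $\tfrac1{x_{n+1}}\partial_{n+1}h$ is no longer singular, and use $a(x)\ge 0$ together with $\partial_{n+1}u(x',0)=0$ to keep the minimum principle valid across $\{x_{n+1}=0\}$ — precisely the hypotheses that already carried Lemma \ref{lem001}.
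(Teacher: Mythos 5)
Your proposal is correct and is essentially the paper's own argument: a Hopf-type exponential barrier whose center lies on the characteristic hyperplane $\{x_{n+1}=0\}$, so that $\partial_{n+1}h$ vanishes like $x_{n+1}$ and the singular drift $\frac{a(x)}{x_{n+1}}\partial_{n+1}h$ stays bounded and is dominated by the $\beta^2$ term, combined with Lemma \ref{lem001} to get the strict inequality $u\geq u(x^0)+\delta$ on the inner sphere. The only cosmetic difference is the choice of annulus and sign convention — the paper takes $x^0=(1,0,\dots,0)$, works in $B_1\setminus B_{\frac 12}$ with $h=e^{-\beta |x|^2}-e^{-\beta}$ and considers $u-\epsilon h$, while you use the interior tangent-ball annulus centered at $P=(1-\rho)x^0$ (which lies on the hyperplane precisely when $x^0_{n+1}=0$) and consider $u+\epsilon h$ — but the mechanism and the conclusion are the same, and your explicit second-derivative-test justification of the minimum principle across $\{x_{n+1}=0\}$ is a careful spelling-out of the step the paper cites as "an application of the maximum principle."
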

\begin{proof}

Assume that $u$ is not a constant. By Lemma \ref{lem001}, $u(x)$ can
not attain its minimum in $B_1$. If $u$ attains the minimum at
$x^0\in
\partial B_1\backslash\{x_{n+1}=0\}$, it is the immediate consequence
of the  standard Hopf' lemma. Without loss of any generality, we
assume that $v$ attains its minimum at $x^0=(1,0,...,0)$. Set
$$h(x)=e^{-\beta x^2}-e^{-\beta}\ \   \ \ \ \text{ in } B_1\backslash B_{\frac 12}.$$
Then $h(x)\geq 0$, $h|_{\partial B_1}=0$ and
\begin{eqnarray*}L(h)&=&\exp\{-\beta x^2\}
\{4\beta^2a_{ij}x_ix_j -2\beta
a_{ii}-2b_i\beta x_i
-2a(x)\beta \}\\&\geq
&\exp\{-\beta x^2\}\{c_0\beta^2 x^2
-c_1\beta |x|-c_2\}>0,\end{eqnarray*} if we choose
$\beta$ large enough. Choosing $\epsilon>0$ small enough, one can
get
$$ L(v-\epsilon h)<0\text{ in } B_1\backslash B_{\frac
12},\   u-\epsilon h\geq u(x^0)\text{ on }
\partial B_1\cup \partial B_{\frac 12}.$$ An application of  the maximum
principle to $u-\epsilon h$ yields
$$\frac{\partial u}{\partial x_1}|_{x=x^0}
\leq \epsilon\frac{\partial h}{\partial x_1}|_{x=x^0}<0.$$ This completes
the proof of the present lemma.
\end{proof}
Turn back to \eqref{103} and  consider
$$v(x)=\frac{1}{|x|^{n+2a-2}}\bar{u}(\frac{x}{|x|^2}).$$ By a direct computation
with $\tau=n+2a+2-\alpha(n+2a-2)$,
 $v$ solves,  \begin{equation}\label{104} \Delta_{n+1}
v+\frac{2a-1}{x_{n+1}}\partial_{n+1}v+|x|^{-\tau}v^\alpha=0 \text{
in } R^{n+1}\backslash\{0\},\    \partial_{n+1}v(x',0)=0\text{ for }x'\neq 0.
\end{equation}
From the definition of $v$, we will have the following asymptotic
behavior at $\infty$
\begin{equation}\begin{cases}\label{105}
v(x)=\displaystyle\frac{a_0}{|x|^{n+2a-2}}+\displaystyle\sum_{i=1}^{n+1}\frac{a_ix_i}{|x|^{n+2a}}+O(\frac
1{|x|^{n+2a}}),\\
\partial_iv(x)=\displaystyle-\frac{(n+2a-2)a_0x_i}{|x|^{n+2a}}+O(\frac{1}{|x|^{n+2a}}),\\
\partial_{ij}v(x)=\displaystyle O(\frac{1}{|x|^{n+2a}}), \text{ with }a_0>0.
\end{cases}\end{equation}
Next we generalize the important lemmas which are essential for the
application of moving plane method to \eqref{002} in \cite{CGS} to
the equation \eqref{001} studied in the present paper. Denote
$$\Sigma_{\lambda}=\{x\in R^{n+1}|x_1<\lambda\},
x^\lambda=(2\lambda-x_1,x_2,...,x_{n+1}).$$ Then there hold the
following lemmas\begin{lemma}\label{lem003} Let $v$ be a function in
a neighborhood of infinity satisfying the asymptotic expansion
\eqref{105}. Then there exist two positive constants $R,\lambda_1$
such that, if $\lambda\geq \lambda_1$,
$$v(x)>v(x^\lambda),\ \text{ for } x\in \Sigma_{\lambda},|x|>R,\lambda\geq \lambda_1.$$
\end{lemma}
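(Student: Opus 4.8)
The plan is to use the asymptotic expansion \eqref{105} together with the standard comparison argument from \cite{CGS} applied to the Kelvin-type transform. First I would introduce the function $w_\lambda(x) = v(x) - v(x^\lambda)$ for $x \in \Sigma_\lambda$ and examine its behavior for large $|x|$. Since the leading term of $v$ is $a_0/|x|^{n+2a-2}$ with $a_0 > 0$, the key observation is that $|x| < |x^\lambda|$ precisely when $x_1 < \lambda$, so the dominant part of $v(x) - v(x^\lambda)$ behaves like $a_0\left(|x|^{-(n+2a-2)} - |x^\lambda|^{-(n+2a-2)}\right)$, which is strictly positive. I would make this quantitative: writing $r = |x|$ and $\rho = |x^\lambda|$, a mean value estimate gives that this difference is comparable to $(n+2a-2)a_0\,(\rho - r)\,r^{-(n+2a-1)}$ up to the error terms in \eqref{105}, and $\rho - r$ is of order $\lambda/r$ when $x_1$ is bounded away from $\lambda$, but more usefully $\rho^2 - r^2 = 4\lambda(\lambda - x_1) \ge 0$.

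The second step is to control the correction terms $\sum_i a_i x_i/|x|^{n+2a}$ and the $O(|x|^{-(n+2a)})$ remainder. These are of lower order than the main term as long as $\lambda - x_1$ is not too small relative to $|x|$; the delicate region is where $x$ is close to the reflection hyperplane $\{x_1 = \lambda\}$. There I would use a different comparison: on the hyperplane itself $w_\lambda \equiv 0$, and one estimates $\partial_{x_1} v$ near that set. From the second line of \eqref{105}, $\partial_1 v(x) = -(n+2a-2)a_0 x_1/|x|^{n+2a} + O(|x|^{-(n+2a)})$, so for $x_1$ close to $\lambda \ge \lambda_1$ large and $|x| > R$, we have $x_1 > 0$ and hence $\partial_1 v < 0$ there, which by integrating in $x_1$ from $x_1$ up to $2\lambda - x_1$ across the reflection shows $v(x) - v(x^\lambda) > 0$. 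Combining the two regimes — far from the hyperplane the main term dominates, near it the sign of $\partial_1 v$ does the job — covers all of $\{x \in \Sigma_\lambda : |x| > R\}$, for suitable fixed $R$ and all $\lambda \ge \lambda_1$.

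I would organize the argument by choosing $R$ first so that all the $O$-terms in \eqref{105} are genuinely dominated (say bounded by $\tfrac12$ times the relevant main coefficients) for $|x| \ge R$, then choosing $\lambda_1 \ge R$ large enough that on the remaining compact-in-direction region the sign conclusions hold uniformly. The main obstacle I anticipate is the uniformity as $\lambda \to \infty$ in the transition zone between "near the hyperplane" and "far from it": one needs the two estimates to overlap, i.e. there should be no sliver of $\Sigma_\lambda \cap \{|x| > R\}$ where neither the leading-term estimate nor the $\partial_1 v$ sign estimate applies. This is handled by noting that $\partial_1 v(x) < 0$ holds on the whole half-space $\{x_1 > 0, |x| > R\}$ once $R$ is large (not just near the hyperplane), so the monotonicity argument in $x_1$ actually works whenever $0 < x_1 < \lambda$, i.e. $x^\lambda$ also has first coordinate positive; and the remaining case $x_1 \le 0$ is exactly where $|x^\lambda|$ is much larger than $|x|$ and the leading term estimate is most robust. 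With this split there is no gap, and the lemma follows.
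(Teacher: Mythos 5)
Your overall route is the same as the paper's (leading-term comparison away from the reflection hyperplane, sign of $v_{x_1}$ near it), but the step you use to close the covering contains a genuine error. From \eqref{105}, $\partial_1 v(x)=-(n+2a-2)a_0x_1|x|^{-(n+2a)}+O(|x|^{-(n+2a)})$, so the error term is of exactly the same order as the main term once $x_1=O(1)$; hence the assertion that $\partial_1 v<0$ on the whole set $\{x_1>0,\ |x|>R\}$ for $R$ large is false, and enlarging $R$ cannot help, since both terms scale like $|x|^{-(n+2a)}$ and their ratio is governed by $x_1$ alone. What is true is only that $\partial_1 v<0$ on $\{x_1\ge C_0,\ |x|>R\}$ for some constant $C_0$ determined by the constant hidden in the $O$-term. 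Consequently your final dichotomy --- monotonicity in $x_1$ on $\{0<x_1<\lambda\}$, leading-term estimate on $\{x_1\le 0\}$ --- leaves the strip $\{0<x_1\le C_0\}$ unjustified, which is precisely the ``sliver'' you set out to exclude.

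The gap is repairable, and the paper's proof indicates how: tie the two regimes to the size of $|x^\lambda|-|x|$ rather than to the sign of $x_1$. If $|x^\lambda|\ge 2|x|$ the leading term wins outright; if $|x^\lambda|<2|x|$ one gets $v(x)-v(x^\lambda)\ge c_1(|x^\lambda|-|x|)|x|^{-(n+2a-1)}-c_2|x|^{-(n+2a)}$ (this is also where your ``comparable by a mean value estimate'' claim needs the restriction $|x^\lambda|\lesssim|x|$; for $|x^\lambda|\gg|x|$ it fails as a lower bound, though the conclusion there is only easier). The leading-term bound can therefore fail only when $|x^\lambda|-|x|\le (c_2/c_1)|x|^{-1}$, and since $|x^\lambda|^2-|x|^2=4\lambda(\lambda-x_1)$ this forces $\lambda(\lambda-x_1)\le C$, hence $x_1\ge\lambda/2\ge\lambda_1/2$ once $\lambda_1$ is large; only on that set does one invoke $v_{x_1}<0$, which is legitimate there because $x_1$ is large, not merely positive, and the whole segment from $x$ to $x^\lambda$ stays in $\{x_1\ge\lambda/2,\ |x|>R\}$. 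Note also that your claim that $x_1\le 0$ forces $|x^\lambda|\gg|x|$ is not right when $|x|\gg\lambda$ (then $|x^\lambda|-|x|\approx 2\lambda(\lambda-x_1)/|x|$ is small); what saves the leading-term estimate both there and on the strip $\{0<x_1\le C_0\}$ is that $\lambda(\lambda-x_1)$ stays bounded below by a large constant once $\lambda\ge\lambda_1$, which is exactly where the largeness of $\lambda_1$ enters. With this arrangement the two regimes genuinely overlap and the lemma follows; as written, your argument does not.
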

\begin{proof}In view of \eqref{105},
\begin{eqnarray}v(x)-v(x^\lambda)&=&a_0(\frac 1{|x|^{n+2a-2}}-\frac
1{|x^\lambda|^{n+2a-2}})+\sum_{j=1}^{n+1}a_jx_j(\frac
1{|x|^{n+2a}}-\frac
1{|x^\lambda|^{n+2a}})\nonumber\\&+&\frac{2a_1(x_1-\lambda)}{|x^\lambda|^{n+2a}}
+O(\frac{1}{|x|^{n+2a}}),
\end{eqnarray}

(1) if $|x^\lambda|\geq 2|x|$, there holds $$v(x)-v(x^\lambda)\geq
\frac{c_0}{|x|^{n+2a-2}}-\frac{c_1}{|x|^{n+2a-1}}>0 \text{ if }
|x|>R, \text{ for sufficiently large } R,$$

(2) if $|x^\lambda|<2|x|$, there holds $$a_0(\frac
1{|x|^{n+2a-2}}-\frac 1{|x^\lambda|^{n+2a-2}})\geq
\frac{a_0}{|x|^{n+2a-3}}(\frac 1{|x|}-\frac 1{|x^\lambda|})\geq
\frac{c_0(|x^\lambda|-|x|)}{|x|^{n+2a-1}}.$$ Hence
$$v(x)-v(x^\lambda)\geq \frac{c_1(|x^\lambda|-|x|)}{|x|^{n+2a-1}}
-\frac{c_2}{|x|^{n+2a}}.$$ \par If
$|x^\lambda|-|x|>\frac{c_2}{c_1}\frac 1{|x|}$, it follows that
$v(x)>v(x^\lambda)$. \par If $|x^\lambda|-|x|\leq
\frac{c_2}{c_1}\frac 1{|x|}$,  this implies $x_1\geq \frac
\lambda2$. By asymptotic expansion of $v$ at $\infty$, $v_{x_1}<0$
if we choose $\lambda$ large enough, thus $v(x)-v(x^\lambda)>0$.
\end{proof}
\begin{lemma}\label{lem004}
Suppose that $v$ satisfies the assumption of Lemma \ref{lem003} and
\begin{equation}\begin{cases}\Delta_{n+1}(v(x)-v(x^{\lambda_0}))+
\frac{2a-1}{x_{n+1}}\partial_{n+1}(v(x)-v(x^{\lambda_0}))\leq 0
\text{ in }
\Sigma_{\lambda_0}\cap \{|x|>R\},\\
v(x)>v(x^{\lambda_0}),\ \forall x\in
\Sigma_{\lambda_0}\cap\{|x|>R\},\end{cases}\end{equation} with
$\partial_{n+1}v(x',0)=0$ and $a\geq \frac 12$. Then
there exist two constants $\epsilon$ and $S>0$ such that
\begin{itemize}
\item[(i)] $v_{x_1}<0$ in $|x_1-\lambda_0|<\epsilon$ and $|x|>S$,

\item[(ii)] $v(x)>v(x^\lambda)$ in $x_1\leq
\lambda_0-\frac{\epsilon}2\leq \lambda$ and $|x|>S$ for all $x\in
\Sigma_{\lambda}$ with
$|\lambda-\lambda_0|\leq o(\epsilon)$.\end{itemize}
\end{lemma}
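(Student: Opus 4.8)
The plan is to follow the classical two-step structure from \cite{CGS}: first establish the \emph{derivative estimate} (i) near the critical plane $\{x_1=\lambda_0\}$ at infinity, and then use it together with a continuity/barrier argument to get the \emph{strict inequality} (ii) for nearby $\lambda$. The basic principle is that the function $w_{\lambda_0}(x)=v(x)-v(x^{\lambda_0})$ is positive in $\Sigma_{\lambda_0}\cap\{|x|>R\}$ and satisfies a degenerate linear elliptic inequality $L_0 w_{\lambda_0}\le 0$ with $L_0=\Delta_{n+1}+\frac{2a-1}{x_{n+1}}\partial_{n+1}$, so Lemma \ref{lem001} and Lemma \ref{lem002} (applied to small balls touching the relevant portions of the boundary, including the characteristic piece $\{x_{n+1}=0\}$ where $\partial_{n+1}w_{\lambda_0}(x',0)=0$) give us a Hopf-type lower bound on the inward normal derivative of $w_{\lambda_0}$ along $\{x_1=\lambda_0\}$.

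For step (i): I would first note that on the plane $\{x_1=\lambda_0\}$ one has $w_{\lambda_0}\equiv 0$, while $w_{\lambda_0}>0$ just inside. Using Lemma \ref{lem002} on balls of a fixed size $\rho$ centered at interior points and tangent to $\{x_1=\lambda_0\}$ — and, where these balls meet $\{x_{n+1}=0\}$, using instead the reflected-ball construction as in the proof of Lemma \ref{lem001} so that the vanishing Neumann condition $\partial_{n+1}w_{\lambda_0}(x',0)=0$ is respected — one obtains $\partial_{x_1}w_{\lambda_0}\le -c<0$ on $\{x_1=\lambda_0\}\cap\{|x|>S_0\}$ for a uniform constant $c$, where $S_0$ is large. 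But $\partial_{x_1}w_{\lambda_0}=v_{x_1}(x)+v_{x_1}(x^{\lambda_0})$, and $v_{x_1}(x^{\lambda_0})\to 0$ as $|x|\to\infty$ by \eqref{105}, so this forces $v_{x_1}(x)<-c/2<0$ on $\{x_1=\lambda_0\}$ for $|x|$ large. Finally, the asymptotic expansion \eqref{105} controls $v_{x_1 x_1}=\partial_{11}v=O(|x|^{-(n+2a)})$, so $v_{x_1}$ stays negative in a slab $|x_1-\lambda_0|<\epsilon$ with $\epsilon$ independent of the (large) value of $|x|$; this is exactly (i).

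For step (ii): write $w_\lambda(x)=v(x)-v(x^\lambda)$. On the part of $\Sigma_\lambda$ with $\lambda_0-\epsilon/2\le x_1\le\lambda$, conclusion (i) gives, by the mean value theorem and the fact that reflection across $\{x_1=\lambda\}$ moves points within $\{|x_1-\lambda_0|<\epsilon\}$ when $|\lambda-\lambda_0|$ is small, that $v(x)>v(x^\lambda)$ there directly from monotonicity of $v$ in $x_1$. For the remaining region $x_1\le\lambda_0-\epsilon/2$ with $|x|>S$, I would observe that $w_{\lambda_0}$ is strictly positive and bounded below on the compact-in-angle set $\{x_1=\lambda_0-\epsilon/2,\ S<|x|\le 2S'\}$ and has the favorable decay $w_{\lambda_0}(x)\gtrsim a_0(|x^{\lambda_0}|^{-(n+2a-2)}-|x|^{-(n+2a-2)})\gtrsim |x|^{-(n+2a-1)}$ at infinity from \eqref{105}; hence for $\lambda$ close to $\lambda_0$ the function $w_\lambda$, which differs from $w_{\lambda_0}$ by a term that is $o(\epsilon)$ uniformly, remains positive on the boundary of this outer region, and then Lemma \ref{lem001} (again using the reflected-ball trick near $\{x_{n+1}=0\}$) propagates positivity into the interior, giving (ii). The main obstacle I anticipate is making the barrier/decay estimates at infinity genuinely \emph{uniform} in $\lambda$ near $\lambda_0$ — i.e., controlling $w_\lambda-w_{\lambda_0}$ and the reflected-point quantities $v(x^\lambda)-v(x^{\lambda_0})$ by $o(\epsilon)$ simultaneously with $|x|$ ranging over an unbounded set — and in handling the corner where the moving plane $\{x_1=\lambda\}$ meets the characteristic hyperplane $\{x_{n+1}=0\}$, where the operator degenerates; both are dealt with by the reflected-ball version of the maximum principle established in Lemma \ref{lem001}, which is why that lemma was set up with the Neumann condition built in.
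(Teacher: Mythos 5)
There is a genuine gap, and it is precisely the step your outline treats as routine. The paper's proof hinges on a \emph{quantitative, decay-matched} lower bound for $w(x)=v(x)-v(x^{\lambda_0})$ in the whole unbounded region, namely \eqref{111}: $w(x)>\delta(\lambda_0-x_1)/|x-\lambda_0\mathbf{e}_1|^{n+2a}$ in $\Sigma_{\lambda_0}\cap\{|x|>R+1\}$. This is obtained by comparing $w$ with the explicit function $h(x)=(\lambda_0-x_1)/|x-\lambda_0\mathbf{e}_1|^{n+2a}$, which solves $\Delta_{n+1}h+\frac{2a-1}{x_{n+1}}\partial_{n+1}h=0$ away from $\lambda_0\mathbf{e}_1$; the Hopf lemma (Lemma \ref{lem002}) is used only on the \emph{compact} sphere $\{|x|=R+1\}$ to seed the comparison, and the maximum principle then propagates the barrier to infinity. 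Your proposal never constructs such a barrier, and the substitutes you offer do not work. First, applying Lemma \ref{lem002} on fixed-size balls tangent to $\{x_1=\lambda_0\}$ at points with $|x|\to\infty$ cannot yield a \emph{uniform} constant $c>0$ with $\partial_{x_1}w\le -c$: the Hopf bound is proportional to the minimum of $w$ on the inner portion of each ball, and $w\to 0$ at infinity, so the constant degenerates; indeed your conclusion $v_{x_1}<-c/2$ on the plane contradicts \eqref{105}, which forces $v_{x_1}=O(|x|^{-(n+2a-1)})\to 0$. A merely qualitative $v_{x_1}<0$ on the plane is also not enough for (i): to keep $v_{x_1}<0$ in a slab of width $\epsilon$ one must beat the perturbation $C\epsilon|x|^{-(n+2a)}$ coming from $\partial_{11}v=O(|x|^{-(n+2a)})$, and this requires the lower bound $-v_{x_1}(\lambda_0,x'')\gtrsim\delta|x''|^{-(n+2a)}$, which is exactly what \eqref{111} delivers and what a pointwise Hopf argument does not.

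The same issue breaks your treatment of (ii) in the region $x_1\le\lambda_0-\epsilon/2$. The bound $w(x)\gtrsim a_0(|x^{\lambda_0}|^{-(n+2a-2)}-|x|^{-(n+2a-2)})\gtrsim|x|^{-(n+2a-1)}$ ``from \eqref{105}'' is not justified: when $\lambda_0-x_1$ stays bounded, $|x^{\lambda_0}|-|x|=4\lambda_0(\lambda_0-x_1)/(|x^{\lambda_0}|+|x|)=O(1/|x|)$, so the leading-term difference is only of order $|x|^{-(n+2a)}$ --- the same order as the sign-uncontrolled remainder in \eqref{105}. (This is exactly why Lemma \ref{lem003} only works for $\lambda$ large, and why the hypothesis $w>0$ must be upgraded through the barrier rather than re-derived from the asymptotics.) Consequently the ``positivity on a compact set plus Lemma \ref{lem001} propagation'' step has no control of $w_\lambda$ near infinity, where a minimum could escape; Lemma \ref{lem001} alone does not rule that out on an unbounded domain. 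In the paper, once \eqref{111} is available, part (ii) needs no further maximum principle at all: one writes $v(x)-v(x^\lambda)=w(x)+\bigl(v(2\lambda_0-x_1,x'')-v(2\lambda-x_1,x'')\bigr)$, estimates the second bracket by $|\lambda-\lambda_0|\cdot O\bigl((\lambda_0-x_1+c)|x-\lambda_0\mathbf{e}_1|^{-(n+2a)}\bigr)$ via the expansion of $v_{x_1}$, and absorbs it into the barrier lower bound since $\lambda_0-x_1\ge\epsilon/2$ and $|\lambda-\lambda_0|=o(\epsilon)$. So the missing idea is the explicit degenerate-harmonic comparison function; without it both halves of your argument fail at the quantitative level where the lemma actually lives.
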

\begin{proof}
With $w(x)=v(x)-v(x^{\lambda_0})$,  it is easy to see
$w(x)|_{x_1=\lambda_0}=0$ and that
\begin{equation}\begin{cases}
\Delta w+\frac{2a-1}{x_{n+1}}\partial_{n+1}w\leq 0,\    \  \text{in
}\Sigma_{\lambda_0}\cap\{|x|>R\},\\
w(x)>0,\partial_{n+1}w(x',0)=0\text{ in
}\Sigma_{\lambda_0}\cap\{|x|>R\}.

\end{cases}\end{equation}

We claim: there exists $\delta>0$ so small that
\begin{equation}\label{111}w(x)>\frac{\delta(\lambda_0-x_1)}{|x-\lambda_0 \mathbf{e}_1|^{n+2a}} \ \
\ \   \   \   \text{ in }
\Sigma_{\lambda_0}\cap\{|x|>R+1\}.\end{equation} Here $\mathbf{e}_i$ means the
vector whose i-th coordinate is 1 and others are 0. By Lemma
\ref{lem002}, we see that $w_{x_1}<0$ on
$\{|x|=R+1\}\cap\{x_1=\lambda_0\}$ which implies for some small
$\bar{\delta},k_0>0$,
$$w(x)>\frac{k_0(\lambda_0-x_1)}{|x-\lambda_0 \mathbf{e}_1|^{n+2a}}\text{ on }
\{|x|=R+1\}\cap \{\lambda_0-\bar{\delta}\leq
x_1 < \lambda_0\}.$$
 We shall notice that $w(x)\geq
c_0>0$ on  $\{|x|=R+1\}\cap \{x_1\leq \lambda_0-\bar{\delta}\}$,
thus
$$w(x)>\frac{\delta(\lambda_0-x_1)}{|x-\lambda_0 \mathbf{e}_1|^{n+2a}}
\text{ on } \{|x|=R+1\}\cap\{x_1<\lambda_0\}.$$ Denote
$$h(x)=\frac{\lambda_0-x_1}{|x-\lambda_0\mathbf{e}_1|^{n+2a}}.$$ It is easy to
see that $$\Delta_{n+1}
h+\frac{2a-1}{x_{n+1}}\partial_{n+1}h=0\text{ in }
R^{n+1}\backslash\{\lambda_0\mathbf{e}_1\}.$$ Therefore \eqref{111} is proved
by the maximum principle. In particular,
$$w_{x_1}(\lambda_0,x'')=2v_{x_1}(\lambda_0,x'')<-\delta/|x''|^{n+2a},\text{ where }
x''=(x_2,x_3,...,x_{n+1}).$$ Combining this with the asymptotic
expansion yields
\begin{eqnarray*}
v_{x_1}(\lambda_0+h,x'')&\leq &
v_{x_1}(\lambda_0,x'')+\frac{C|h|}{|x|^{n+2a}}\\
&\leq &-\frac 12\delta\frac
1{|x''|^{n+2a}}+\frac{C|h|}{|x|^{n+2a}}\\&\leq& -\frac 14\delta\frac
1{|x|^{n+2a}},
\end{eqnarray*}as $|h|<\frac{\delta}{4C}$ and $|x|$ is large. This proves
the first part of the present lemma. \par  As for the second part,
from the asymptotic expansion and the result of the first part, it
follows that
$$v(2\lambda_0-x_1,x'')-v(2\lambda-x_1,x'')\geq
\frac{-c(\lambda_0-\lambda)}{|x-\lambda_0e_1|^{n+2a}}(\lambda_0-x_1+c),$$
as $x_1<\lambda_0$ and $|x|$ is large. Hence, \begin{eqnarray*}
&&v(x_1,x'')-v(2\lambda-x_1,x'')\\&=&(v(x_1,x'')-v(2\lambda_0-x_1,x''))+
(v(2\lambda_0-x_1,x'')-v(2\lambda-x_1,x''))\\&\geq &
\frac{\delta(\lambda_0-x_1)}{|x-\lambda_0e_1|^{n+2a}}-
\frac{c(\lambda_0-\lambda)}{|x-\lambda_0e_1|^{n+2a}}(\lambda_0-x_1+c)\\&=&
\frac{[\delta-c(\lambda_0-\lambda)]
(\lambda_0-x_1)-c^2(\lambda_0-\lambda)}{|x-\lambda_0e_1|^{n+2a}}>0,
\end{eqnarray*}
if $x_1\leq \lambda_0-\frac{\epsilon}{2}$ and $|\lambda-\lambda_0|$
is sufficiently small compared to $\epsilon$. This completes the proof of
the present lemma.
\end{proof}

 In all the above arguments, the discussion is always carried out outside a
 neighborhood of the origin. Now let us investigate the behavior of
 $v$ in this neighborhood. The following idea mainly comes from \cite{L2} and \cite{CL2}.

\begin{lemma}\label{lem005}
Suppose that $v\in C^2(B_1\backslash \{0\})\cap C(\bar{B}_1\backslash\{0\})$ is a  positive solution
to the following problem with $n+2a>2$,
\begin{equation}L(v)=\Delta_{n+1} v+\frac{2a-1}{x_{n+1}}\partial_{n+1}v\leq 0 \text{
in } B_1\backslash\{0\} \text{ with } \partial_{n+1}v(x',0)=0.
\end{equation} Then there holds $$v(x)\geq \underset{\partial B_1}{\inf}
v, \ \   \   \   \   \   \   \   \   \forall x\in B_1\backslash
\{0\}.$$
\end{lemma}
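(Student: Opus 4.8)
The plan is to use a barrier/comparison argument against a fundamental-solution-type function centered at the origin, exploiting that $n+2a>2$. First I would observe that the operator $L$ is (after the change of variables $x_{n+1}=2\sqrt y$, or directly) the Laplace–Beltrami-type operator associated with dimension $n+2a$; in particular the radial function $\Gamma(x)=|x|^{-(n+2a-2)}$ satisfies $L(\Gamma)=0$ in $R^{n+1}\setminus\{0\}$ and $\partial_{n+1}\Gamma(x',0)=0$ for $x'\neq 0$, exactly as used for the function $h$ in the proof of Lemma \ref{lem004}. This $\Gamma$ blows up at the origin, which is precisely what lets us absorb the removed singular point.

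Next I would set $m=\inf_{\partial B_1}v$ and, for $\eps>0$, consider $w_\eps(x)=v(x)-m+\eps\bigl(|x|^{-(n+2a-2)}-1\bigr)$ on the annulus $B_1\setminus \bar B_\rho$ for small $\rho>0$. On $\partial B_1$ we have $w_\eps=v-m\ge 0$. On $\partial B_\rho$, since $v>0$ is bounded below by a positive constant on that sphere (for fixed $\rho$) while $\eps(\rho^{-(n+2a-2)}-1)\to+\infty$ as $\rho\to 0$, we get $w_\eps\ge 0$ on $\partial B_\rho$ once $\rho$ is chosen small (depending on $\eps$). Also $L(w_\eps)=L(v)\le 0$ in $B_1\setminus\bar B_\rho$ and $\partial_{n+1}w_\eps(x',0)=0$ on the slit. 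By the minimum principle of Lemma \ref{lem001} applied on the domain $B_1\setminus\bar B_\rho$ — whose closure meets $\{x_{n+1}=0\}$, which is why the ordinary Hopf/maximum principle is not enough and Lemma \ref{lem001} is needed — the function $w_\eps$ cannot attain an interior minimum unless it is constant, so $w_\eps\ge 0$ throughout $B_1\setminus\bar B_\rho$. Since any $x\in B_1\setminus\{0\}$ lies in $B_1\setminus\bar B_\rho$ for all sufficiently small $\rho$, we conclude $v(x)-m+\eps(|x|^{-(n+2a-2)}-1)\ge 0$ for every $\eps>0$; letting $\eps\to 0$ gives $v(x)\ge m=\inf_{\partial B_1}v$, as desired.

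The main obstacle is the interface $\{x_{n+1}=0\}$: the domain $B_1\setminus\bar B_\rho$ is not relatively compact in the open region where $L$ is nondegenerate, so one genuinely must invoke Lemma \ref{lem001} (with its Hopf-type boundary argument at characteristic points using $\partial_{n+1}v(x',0)=0$) rather than the classical strong maximum principle. One technical point to handle carefully is the regularity needed to apply Lemma \ref{lem001}: it requires $u\in C^2(B_1)\cap C(\bar B_1)$ with the vanishing normal derivative on the flat part, and here $w_\eps$ is $C^2$ in $B_1\setminus\{0\}$ and continuous up to $\partial B_1$ and up to the slit $\{x_{n+1}=0,\ x'\neq 0\}$, which suffices after restricting to the annulus and noting the lemma's proof only uses the barrier construction near a putative minimum point, which here is bounded away from the origin. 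An alternative, if one prefers to avoid reproving things on an annulus touching the hyperplane, is to first even-reflect $v$ across $\{x_{n+1}=0\}$ (legitimate because $\partial_{n+1}v(x',0)=0$) so that $L$ becomes a uniformly elliptic operator with possibly singular but sign-favorable lower-order term, and then run the same barrier argument with the classical strong maximum principle; either route yields the stated inequality.
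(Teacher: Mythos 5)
Your proposal is correct and follows essentially the same route as the paper: a comparison argument on the annulus with the $L$-harmonic barrier $|x|^{-(n+2a-2)}$ (the paper's $h_s(x)=m_1+l(s)(|x|^{-(n+2a-2)}-1)$, with $l(s)\to 0$ playing the role of your $\eps$ and $\rho(\eps)$), combined with the degenerate minimum principle of Lemma \ref{lem001} to rule out interior minima on the characteristic slit, and then passing to the limit in the small parameter. The only difference is the bookkeeping of the two parameters, which is immaterial.
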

\begin{proof}
Set $\underset{\partial B_1}{\inf} v=m_1$. Consider $h_s(x)$, with $0<s<1$ and a suitable constant $l(s)$,
$$h_s(x)=m_1+l(s)(\frac 1{|x|^{n+2a-2}}-1),
h_s|_{\partial B_1}=m_1,h_s|_{\partial B_s}=-1.$$ From the definition
of $h_s(x)$, there holds$$L(v-h_s)\leq 0\text{ in } B_1\backslash
B_s,\ \ v-h_s|_{\partial B_1}\geq 0, \    \ v-h_s|_{\partial
B_s}\geq 1.$$ By the maximum principle we have for any fixed $x$
$$v(x)\geq h_s(x)=m_1+l(s)(\frac 1{|x|^{n+2a-2}}-1),\    \  \text{ if } s \text{ is small }.$$ It is
easy to see
$$l(s)=-\frac{(m_1+1)s^{n+2a-2}}{1-s^{n+2a-2}}\rightarrow 0 \text{
as } s\rightarrow 0.$$Thus  passing to the limit $s\rightarrow 0$, we
have proved $v(x)\geq m_1$. This finishes the proof of the present
lemma.
\end{proof}
\section{The proof for Theorem \ref{thm1}}
\label{sec:2}
Now we can  prove Theorem \ref{thm1}.
\par Proof of Theorem \ref{thm1}:
Set
$$\lambda_0=\underset{\lambda}{\inf}\{\lambda\big|v(x)>v(x^{\lambda'})\ \
\forall x\in \Sigma_{\lambda'},\lambda'\geq \lambda\}.$$ By Lemma
\ref{lem003} and Lemma \ref{lem005}, we have  $|\lambda_0|<\infty$. Also we can claim that
\begin{equation}\label{112}v(x)=v(x^{\lambda_0}),\ \ \forall x\in
\Sigma_{\lambda_0}\backslash\{0\}.\end{equation}

Without loss of generality, we may assume $\lambda_0> 0$. Since for the case $\lambda_0=0$, we can start the moving plane from $-\infty$  and stop at $x_1=\lambda_1$. If $\lambda_1<0$, we can prove $v(x)=v(x^{\lambda_1})$ by the
 same arguments as we do in the case $\lambda_0>0$. Otherwise $\lambda_1=0$, the claim \eqref{112} holds immediately. Now we turn to prove the claim \eqref{112} for $\lambda_0>0$. If \eqref{112} is false, by Lemma \ref{lem001}, one gets
$$v(x)>v(x^{\lambda_0}),\ \ \forall x\in
\Sigma_{\lambda_0}\backslash\{0\}.$$ Also from the definition of $\lambda_0$ and Lemma \ref{lem004}, one can choose $\lambda_k\uparrow \lambda_0$ as $k\rightarrow \infty$ such that $$
 \emptyset\neq\sigma_k=\{x\in \Sigma_{\lambda_k}\backslash\{0\}\big|v(x)
\leq v(x^{\lambda_k})\}\subset B_R.$$ Noting Lemma \ref{lem005} and the     continuity of $v$, we can see that $$
v(x)>v(x^{\lambda_k}) \text{ in } B_r\backslash\{0\},\  B_r\subset \Sigma_{
\lambda_k},$$ if we choose $r$ small enough and $k$ large enough. This implies that $\sigma_k\subset B_R\backslash B_r$. Set $w_k(x)=v(x)-v(x^{\lambda_k})$ and $w(x)=v(x)-v(x^{\lambda_0})$. It is easy to see that $\exists x^k\in \sigma_{
k}$ such that $w_k(x^k)=\inf w_k(x)$. Next we consider $x^\infty=\underset{k\rightarrow \infty }{\lim} x^k$ in two cases

(1) $x^\infty\in \Sigma_{\lambda_k}$: we have $w(x^\infty)=\underset{k\rightarrow \infty}{\lim} w_k(x^k)\leq 0$ which is a contradiction.

(2) $x^\infty \in \partial\Sigma_{\lambda_k}$: we have $\partial_{x_1}w(x^\infty)=\underset{k\rightarrow \infty}{\lim} \partial_{x_1}w_k(x^k)= 0$ which is a contradiction too.

This proves the assertion \eqref{112}.

If $\alpha<\frac{n+2a+2}{n+2a-2}$, we have $\tau>0$. To prove the radial symmetry of $v$, one should take a
transformation. Set
$$\bar{v}(x',x_{n+1},x_{n+2})=v(x',\sqrt{x_{n+1}^2+x_{n+2}^2}).$$
It follows that, \begin{equation}\label{301}
\Delta_{n+2}\bar{v}+\frac{2a-2}{x_{n+2}}\partial_{n+2}\bar{v}+
|x|^{-\tau}\bar{v}^\alpha=0,
\text{ in }R^{n+2},\partial_{n+2}\bar{v}(x',x_{n+1},0)=0.
\end{equation}
There is a
singularity at $0$, and hence $\lambda_0$ must be $0$. Notice that
 \eqref{301}  is rotation invariant about $x', x_{n+1}$.  We have
$$v(x',x_{n+1})=\bar{v}(x',x_{n+1},0)=\bar v(\bar{x}',\bar x_{n+1},0)=v(\bar x',\bar x_{n+1}),\text{ if }|x'|^2+x_{n+1}^2=|\bar{x}'|^2+\bar{x}_{n+1}^2.$$ This implies that $$\bar{u}(x',x_{n+1})=\bar u(\bar{x}',\bar x_{n+1}),\text{ if }|x'|^2+x_{n+1}^2=|\bar{x}'|^2+\bar{x}_{n+1}^2.$$
 If we take another transformation such
as
$$v_b(x)=\frac 1{|x|^{n+2a-2}}\bar{u}_b(\frac{x}{|x|^2}),\text{ here
}b_{n+1}=0,$$  where $\bar{u}_b(x)=\bar{u}(x-b)$. Repeating the above arguments, similarly we have
$$\bar u(x',x_{n+1})=\bar u(\bar{x}',\bar x_{n+1}), \text{ if }|x'+b'|^2+x_{n+1}^2=|\bar{x}'+b'|^2+\bar x_{n+1}^2.$$ In fact, $b'$ can
be chosen arbitrarily, thus $\bar u$ must be a constant. This means that $\bar u\equiv 0$.

Now we consider the case $\alpha=\frac{n+2a+2}{n+2a-2}$ or $\tau=0$.
By the same arguments as we did in the case $\tau>0$,
there exists $\lambda=(\lambda_1,...,\lambda_{n+1})$ such that
\begin{equation}\label{109}
\bar{v}(x',x_{n+1},0)=v(x',x_{n+1})=v(\bar{x}',\bar{x}_{n+1})
=\bar{v}(\bar{x}',\bar{x}_{n+1},0),\text{ if }
\sum_{i=1}^{n+1}|x_i-\lambda_i|^2=
\sum_{i=1}^{n+1}|\bar{x}'_i-\lambda_i|^2.
\end{equation} In fact, $\lambda_{n+1}$ must be 0. Otherwise, it follows that $$
v(x',2\lambda_{n+1}-x_{n+1})=v(x',x_{n+1})=v(x',-x_{n+1}).$$ It shows
that for the fixed $x'$, $v$ is periodic with respect to $x_{n+1}$
with period $2\lambda_{n+1}$. This means that $v$ must vanish which
is impossible. For $\lambda'=(\lambda_1,...,\lambda_n)$, we have two
cases.\begin{itemize}
\item[(1)] $\lambda'=0$:  noting
$\bar{u}(x)=\frac{1}{|x|^{n+2a-2}}v(\frac{x}{|x|^2})$, $\bar{u}(x)$
is radially symmetric with respect to the origin.
\item[(2)] $\lambda'\neq 0$: This means that $0$ is not the
symmetric center of $v$, $v$ must be $C^2$ at $0$. In other words,
$\bar{u}(x)$ has the similar asymptotic behavior at $\infty$ as
$v(x)$. This allows us to apply the  moving plane method to
$\bar{u}(x)$ directly to obtain that $\bar{u}(x)$ is radially
symmetric with respect to some point $b\in R^{n+1},b_{n+1}=0$.
\end{itemize}
The above arguments show  that $\bar u(x)$ is radially symmetric with
respect to a point $b\in \{b_{n+1}=0\}$. Now we can follow the
arguments of Section 3 in \cite{CLO}, then we can complete the proof of Theorem
\ref{thm1}.

Comparing Theorem \ref{thm1} with \eqref{004}, we can
regard \eqref{001} as an equation defined in dimension $n+2a$.
Therefore, we can consider the following more general equation
\begin{equation}\begin{cases}\label{003} \displaystyle
\sum_{i=1}^m y_iu_{y_iy_i}+ \sum_{i=1}^{m}a_i u_{y_i}+\Delta_x
u+u^\alpha=0\text{ in } R^{m,n}_+=\{(x,y)\big|x\in R^n,y_i\in R^1_+,i=1,...,m\},\\
u\geq 0 \text{ in } R^{m,n}_+ \text{ and } u\in
C^2(\overline{R^{m,n}_+}).\end{cases}
\end{equation}
\begin{theorem}\label{thm2}
Let $u(x,y)$ be a nonnegative solution of \eqref{003} with constants
$a_i>1,i=1,...,m$. Then with $a=\displaystyle \sum_{i=1}^m a_i$
\begin{itemize}
\item[(1)] for $1<\alpha<\frac{n+2a+2}{n+2a-2},
\   u\equiv 0$. \item[(2)] for $\alpha=\frac{n+2a+2}{n+2a-2}$,
$u_{t,x_0}(x,y)=
\displaystyle\left(\frac{t\sqrt{(n+2a)(n+2a-2)}}{t^2+4\sum_{i=1}^m
y_i+|x-x_0|^2}\right)^{\frac{n+2a-2}{2}}$,
\end{itemize}for some $x_0\in R^n$ and $t\geq 0$.
\end{theorem}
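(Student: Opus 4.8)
The plan is to run the proof of Theorem \ref{thm1} essentially word for word, the only new feature being that after the reduction one must deal with $m$ first--order singular terms at once rather than with one. First I would set $s_i=2\sqrt{y_i}$ for $i=1,\dots,m$ and $\bar u(x,s_1,\dots,s_m)=u\bigl(x,\tfrac{s_1^2}{4},\dots,\tfrac{s_m^2}{4}\bigr)$. Carrying out the computation preceding \eqref{103} in each variable $y_i$ separately turns \eqref{003} into
\[
\Delta_{n+m}\bar u+\sum_{i=1}^m\frac{2a_i-1}{s_i}\,\partial_{s_i}\bar u+\bar u^\alpha=0 ,\qquad \partial_{s_i}\bar u\big|_{s_i=0}=0 ,
\]
in $\{s_i>0,\ i=1,\dots,m\}$. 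Since $u\in C^2(\overline{R^{m,n}_+})$, reflecting $\bar u$ evenly across each hyperplane $\{s_i=0\}$ in turn produces a solution in $C^2(R^{n+m})$ of the same equation. The operator $\Delta_{n+m}+\sum_i\frac{2a_i-1}{s_i}\partial_{s_i}$ equals $W^{-1}\mathrm{div}(W\nabla\cdot)$ with $W=\prod_i s_i^{2a_i-1}$ homogeneous of degree $\sum_i(2a_i-1)=2a-m$, so its effective dimension is $(n+m)+(2a-m)=n+2a$, where $a=\sum_i a_i$; this is exactly why the critical exponent is $\frac{n+2a+2}{n+2a-2}$ and why the Kelvin exponent used below is $n+2a-2$. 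Note also $n+2a>2$ automatically, since $a>m\ge1$.

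Next I would re-establish Lemma \ref{lem001} through Lemma \ref{lem005} for the general operator $L(u)=\sum a_{ij}\partial_{ij}u+\sum_{i=1}^n b_i\partial_i u+\sum_{i=1}^m\frac{a^{(i)}(x)}{s_i}\partial_{s_i}u$ with $a^{(i)}\ge0$. The barriers used there ($1-e^{-\beta[\cdots]}$ in Lemma \ref{lem001}, $e^{-\beta|x|^2}-e^{-\beta}$ in Lemma \ref{lem002}, and $m_1+l(s)(|x|^{2-n-2a}-1)$ in Lemma \ref{lem005}) carry over verbatim: each extra term $\frac{a^{(i)}}{s_i}\partial_{s_i}h$ is either a bounded first--order term (when the reference point has $s_i>0$) or has the favourable sign (when that point lies on $\{s_i=0\}$, since the barriers decrease away from their centres and $a^{(i)}\ge0$), so computations such as \eqref{101} only acquire finitely many harmless terms. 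I would then perform the Kelvin transform $v(X)=|X|^{-(n+2a-2)}\bar u(X/|X|^2)$, $X\in R^{n+m}$, which as in \eqref{104}--\eqref{105} yields a solution of
\[
\Delta_{n+m}v+\sum_{i=1}^m\frac{2a_i-1}{s_i}\,\partial_{s_i}v+|X|^{-\tau}v^\alpha=0 ,\qquad \tau=n+2a+2-\alpha(n+2a-2)\ge0 ,
\]
with $\partial_{s_i}v|_{s_i=0}=0$ and the asymptotic expansion \eqref{105}. The comparison Lemma \ref{lem003} and Lemma \ref{lem004} go through unchanged (the function $(\lambda_0-x_1)|x-\lambda_0\mathbf{e}_1|^{-(n+2a)}$ is still annihilated by the weighted operator), so the moving--plane argument of Section \ref{sec:2} in the $x_1$--direction applies and gives $v(x)=v(x^{\lambda_0})$ for $x\in\Sigma_{\lambda_0}\setminus\{0\}$, and likewise for every direction in the $x$--variables.

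To upgrade symmetry in the $x$--variables to full radial symmetry I would use the device from the proof of Theorem \ref{thm1}: introduce one extra variable $\omega_i$ for each $s_i$ and set $\tilde v(x,s,\omega)=v\bigl(x,\sqrt{s_1^2+\omega_1^2},\dots,\sqrt{s_m^2+\omega_m^2}\bigr)$, which solves $\Delta_{n+2m}\tilde v+\sum_{i=1}^m\frac{2a_i-2}{\omega_i}\partial_{\omega_i}\tilde v+|X|^{-\tau}\tilde v^\alpha=0$ (because $\tfrac1{\omega_i}\partial_{\omega_i}$ applied to a function of $\rho_i=\sqrt{s_i^2+\omega_i^2}$ equals $\tfrac1{\rho_i}\partial_{\rho_i}$) and is invariant under rotations of the first $n+m$ coordinates $(x,s)$. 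When $\tau>0$, the singularity of $|X|^{-\tau}$ at the origin forces the moving--plane centre to be $0$, hence $v$, and with it $\bar u$, is radial about the origin; combining this with translation invariance in $x$ (Kelvin transforms of $\bar u(\cdot-b)$, $b=(b',0)$, exactly as at the end of the proof of Theorem \ref{thm1}) forces $\bar u\equiv0$, which is conclusion (1). When $\tau=0$, the same reasoning, together with the periodicity argument ruling out a nonzero $s_i$--component of the centre, shows that $\bar u$ is radially symmetric about some $(x_0,0)\in R^n\times\{0\}^m$ with the decay of \eqref{105}; writing $\bar u=\phi(r)$ with $r^2=|x-x_0|^2+\sum_i s_i^2$ and using $\partial_{s_i}\phi(r)=\phi'(r)s_i/r$, the equation collapses to the Yamabe ODE $\phi''+\frac{n+2a-1}{r}\phi'+\phi^\alpha=0$ in dimension $n+2a$, so by the classification in Section 3 of \cite{CLO} one has $\phi(r)=\bigl(\tfrac{t\sqrt{(n+2a)(n+2a-2)}}{t^2+r^2}\bigr)^{(n+2a-2)/2}$; recalling $s_i^2=4y_i$ gives conclusion (2).

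There is no conceptual obstacle beyond Theorem \ref{thm1}; the heaviest bookkeeping sits in two places: checking that the iterated even extensions across the $m$ hyperplanes $\{s_i=0\}$ genuinely produce a $C^2$ (not merely Lipschitz) solution on $R^{n+m}$, and verifying that the weighted barriers, the weighted Kelvin covariance and the asymptotic expansion \eqref{105} still close when several singular terms act simultaneously. In both cases one only has to observe that the relevant signs cooperate, so no new difficulty arises.
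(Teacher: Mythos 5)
Your proposal is correct and follows exactly the route the paper intends: the paper itself gives no separate proof of Theorem \ref{thm2}, stating only that it is "just the same as the proof of Theorem \ref{thm1}" once the Section 2 lemmas are re-established for \eqref{003}, which is precisely what you carry out (reduction via $s_i=2\sqrt{y_i}$, iterated even extension, generalized barriers, Kelvin transform with exponent $n+2a-2$, moving planes, and the added-variable trick for radial symmetry). Your expansion of the omitted details is consistent with the paper's argument, so no further comment is needed.
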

The proof of Theorem \ref{thm2} is just the same as the proof of
Theorem \ref{thm1}, as we can easily establish the similar lemmas as
in Section 2 for \eqref{003}. Thus we omit the details here.

\section{An application to a priori estimates
of semi-linear degenerate elliptic equations}
\label{sec:3}
The proof for Theorem \ref{thm501}: our proof is by contradiction and uses a scaling argument reminiscent to that used in the theory of Minimal Surfaces, also refer to   \cite{GS2}. If \eqref{888} is false, we
can get a sequence $u^k\in C^2(\Omega)\cap L^{\infty}(\Omega)$ such
that
\begin{equation} |u^k|_{L^\infty}=M_k\rightarrow \infty \text{ as }
k\rightarrow \infty.
\end{equation} Hence, we can find $x^k\in \Omega\rightarrow
\bar{x}\in \bar\Omega$ as $k\rightarrow \infty$ such that
$u^k(x^k)\geq \frac {M_k}2$. Next we shall distinguish two cases to investigate.

\underline{Case 1: $\bar x\in \Omega$}. With $y=\frac{x-x^k}{\mu_k}$ define the scaled function \begin{equation}
v^k(y)=\mu_k^{\frac{2}{\alpha-1}}u^k(x) \text{ where } \mu_k^{\frac{2}{\alpha-1}}M_k=1.
\end{equation} For large $k$, $v^k(y)$ is well defined in $B_{\frac{d}{\mu_k}}(0)$ where $2d=\text{dist}(\bar x,\partial \Omega)$, \begin{equation}
\underset{y\in B_{\frac{d}{\mu_k}}(0)}{\sup}v^k(y)=1,v^k(0)\geq \frac 12,
\end{equation} Moreover, $v^k(y)$ satisfies \begin{equation}\label{d01}
a^{ij}_k\frac{\partial^2v^k}{\partial y_i\partial y_j}+\mu_kb^i_k\frac{\partial v^k}{\partial y_i}+\mu_k^{\frac{2\alpha}{\alpha-1}}f(\mu_k y+x^k,\mu_k^{-\frac 2{\alpha-1}}v^k)=0,\text{ in } B_{\frac{d}{\mu_k}}(0)
\end{equation}
where $a^{ij}_k(y)=a^{ij}(\mu_k y+x^k),b^i_k(y)=b^i(\mu_k y+x^k)$. Noting that $y\in B_{\frac{d}{\mu_k}}(0)$ which implies $\mu_k y+x^k\in B_d(x^k)$ and $\text{dist}(B_d(x^k),\partial \Omega)>\frac d2$ for $k$ large enough, one has \eqref{d01} is uniformly elliptic in $B_{\frac{d}{\mu_k}}(0)$. From \eqref{604}, we see that $$\underset{k\rightarrow \infty}{\lim}|\mu_k^{\frac{2\alpha}{\alpha-1}}f(\mu_k y+x^k,\mu_k^{-\frac 2{\alpha-1}}v^k)-h(\mu_k y+x^k)(v^k(y))^\alpha|=0.$$ Therefore, given any  $R$ such that $B_R(0)\subset B_{\frac{d}{\mu_k}}(0)$, we can, by elliptic $L^p$ estimates, find uniform bounds for $\|v^k\|_{W^{2,p}(B_R(0))}$. Choosing $p$ large, we obtain by Sobolev embedding theorem that $\|v^k\|_{C^{1,\beta}(B_R(0))}$, $0<\beta<1$, is also uniformly bounded. Passing to the limit $k\rightarrow\infty $ gives $v^k\rightarrow v$ and $v$ solves \begin{equation}\label{d02}
a^{ij}(\bar x)\frac{\partial^2 v}{\partial y_i\partial y_j}+h(\bar x)v^\alpha=0,\text{ in } R^2, v(0)\geq \frac 12.
\end{equation}By performing a rotation and stretching of coordinates, \eqref{d02} is reduced to \begin{equation}\label{e01}
\Delta v+v^\alpha=0 \text{ in } R^2.
\end{equation}  Suppose $v$ is a non-trivial non-negative solution of \eqref{e01}. Let $\tilde v(y_1,y_2,y_3)=v(y_1,y_2)$. Then \begin{equation}
   \Delta \tilde v+\tilde v^\alpha=0\text{ in } R^3
   \end{equation}Noting $\alpha<\frac{3+2a}{2a-1}<\frac{3+2}{3-2}=5$ and the results of \cite{GS}, we must have $\tilde v\equiv 0$ which contradicts to $\tilde v(0,y_3)\geq \frac 12$.

\underline{Case 2: $\bar x \in \partial \Omega$}. This is quite different from Case 1.
Without loss of generality, we may
assume that $$\partial_1\phi(\bar{x})=0,\partial_2\phi(\bar{x})\neq 0.$$
 From \eqref{666} it follows $$0=a^{ij}\partial_i\phi\partial_j\phi(\bar{x})=
 a^{22}(\partial_2\phi)^2\Rightarrow a^{22}(\bar{x})=0.$$ Hence $a^{11}(\bar x)>0$ follows immediately from \eqref{603}.
 Denote$$
y_1=x_1, y_2=\phi(x), \    \   \forall x\in B_{d}(\bar{x})\cap
\bar{\Omega}, \ d\text{ small enough}.$$ Therefore, in the new
coordinates $(y_1,y_2)$, \eqref{601} can be written as for some small $\delta$ \begin{eqnarray} \label{605} \tilde
{a}^{22}\frac{\partial^2 u^k}{\partial y_2^2}+
\tilde{a}^{11}\frac{\partial^2 u^k}{\partial
y_1^2}+2\tilde{a}^{12}\frac{\partial^2 u^k}{\partial y_1\partial
y_{2}}+\tilde{b}^1\frac{\partial u^k}{\partial
y_1}+\tilde{b}^2\frac{\partial u^k}{\partial y_{2}}+f(y,u^k)=0,\text{ in }B_{\delta}(y^k)\cap \{y_2>0\}
\end{eqnarray}where \begin{equation}
\tilde{a}^{22}=a^{ij}\partial_i\phi\partial_j\phi,
\tilde{a}^{11}=a^{11},\tilde{a}^{12}=a^{1j}\partial_j\phi,
\tilde{b}^1=b^1,\tilde{b}^2=
b^j\partial_j\phi+a^{ij}\partial_{ij}\phi.
\end{equation}Set $$p_1=\frac{y_1-y_1^k}{\mu_k},
p_{2}=\frac{y_{2}-y_{2}^k}{\mu_k^2},
v^k(p)=\mu_k^{\frac{2}{\alpha-1}}u^k(y)\text{ with }\mu_k^{\frac{2}{\alpha-1}}M_k=1.$$
Then
\begin{eqnarray} \label{606}
\mu_k^{-2}\tilde a^{22}\frac{\partial^2 v^k}{\partial p_{2}^2}+
\tilde a^{11}\frac{\partial^2 v^k}{\partial
p_1^2}&+&2\mu_k^{-1}\tilde a^{12}\frac{\partial^2 v^k}{\partial
p_1\partial p_{2}}+\mu_k\tilde b^1\frac{\partial v^k}{\partial
p_1}\\&+&\tilde b^2\frac{\partial v^k}{\partial
p_{2}}+\mu_k^{\frac{2\alpha}{\alpha-1}}f(p,\mu_k^{-\frac{2}{\alpha-1}}v^k)=0\nonumber
\text{ in }B_{\frac{\delta}{\mu_k}}(0)\cap\{p_2>-\frac{y^k_2}{\mu^2_k}\}.\end{eqnarray}
Set $H_k=B_{\frac{\delta}{\mu_k}}(0)\cap\{p_2>-\frac{y^k_2}{\mu^2_k}\}$. Then we will have the
following lemma\begin{lemma}
\label{lem401}In the region considered, one has \begin{equation}\label{401}\tilde{a}^{11}\geq c_0>0,\tilde{a}^{12}=A^{12}(p)\mu_k^2(p_2+\frac{y_2^k}{\mu_k^2}),\tilde{a}^{22}=
A^{22}(p)\mu_k^2(p_2+\frac{y^k_2}{\mu_k^2}),\end{equation}where \begin{equation}\label{a111}A^{12},A^{22}\in C^1(\bar{H}_k),A^{22}(p_1,-\frac{y_2^k}{\mu^2_k})>0,
\frac{\tilde{b}^2(p_1,-\frac{y^k_2}{\mu_k^2})}
{A^{22}(p_1,-\frac{y_2^k}{\mu^2_k})}>2.\end{equation}
\end{lemma}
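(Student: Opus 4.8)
\textbf{Proof proposal for Lemma \ref{lem401}.}

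The plan is to read off all three claims in \eqref{401} and the auxiliary facts in \eqref{a111} directly from the change of variables $y_1=x_1$, $y_2=\phi(x)$ followed by the parabolic-type rescaling $p_1=(y_1-y_1^k)/\mu_k$, $p_2=(y_2-y_2^k)/\mu_k^2$, using only the structural hypotheses \eqref{666} and \eqref{603} together with a Taylor expansion in $y_2$ about the boundary $\{y_2=0\}$, which in the $p$-coordinates is the line $\{p_2=-y_2^k/\mu_k^2\}$. First, the bound $\tilde a^{11}=a^{11}\geq c_0>0$: we established just before the lemma that $a^{22}(\bar x)=0$ and hence, by \eqref{603} (the eigenvalue $\lambda_2$ degenerates like $m(x)\phi$ while $\lambda_1\geq c_0$), that $a^{11}(\bar x)>0$; shrinking $\delta$ and taking $k$ large keeps $a^{11}$ bounded below by a fixed positive constant on $B_\delta(y^k)$, and this persists in $H_k$. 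Second, the vanishing of $\tilde a^{22}$ and $\tilde a^{12}$ on the boundary: by \eqref{666}, $\tilde a^{22}=a^{ij}\partial_i\phi\,\partial_j\phi$ vanishes identically on $\partial\Omega=\{y_2=0\}$, and $\tilde a^{12}=a^{1j}\partial_j\phi$ must also vanish there because the matrix $(a^{ij})$ is nonnegative and its quadratic form in the $\partial\phi$ direction is zero, so by Cauchy–Schwarz (applied to the semi-definite form $(a^{ij})$) the mixed term $a^{1j}\partial_j\phi$ is controlled by $(a^{11})^{1/2}(a^{ij}\partial_i\phi\partial_j\phi)^{1/2}=0$; hence $\tilde a^{12}$ too vanishes on $\{y_2=0\}$. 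Since $\tilde a^{22},\tilde a^{12}\in C^2(\bar\Omega)$ (from $a^{ij}\in C^2(\bar\Omega)$, $\phi\in C^2$) and vanish on $\{y_2=0\}$, we may write $\tilde a^{22}=y_2\,A^{22}(y)$, $\tilde a^{12}=y_2\,A^{12}(y)$ with $A^{22},A^{12}\in C^1$; substituting $y_2=\mu_k^2(p_2+y_2^k/\mu_k^2)$ gives exactly the claimed form in \eqref{401}, with the $A$'s now viewed as $C^1$ functions on $\bar H_k$.

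The remaining assertions in \eqref{a111} are the positivity $A^{22}(p_1,-y_2^k/\mu_k^2)>0$ on the boundary and the key inequality $\tilde b^2/A^{22}>2$ there. For the first, $A^{22}|_{y_2=0}=\partial_{y_2}\tilde a^{22}|_{y_2=0}$, which is (up to the nonvanishing factor coming from $\partial\phi$) the normal derivative $\nabla(a^{ij}\partial_i\phi\partial_j\phi)\cdot\nabla\phi$; hypothesis \eqref{666} guarantees this gradient is nonzero on $\partial\Omega$, and one checks the sign is positive because $\tilde a^{22}\ge 0$ in $\Omega=\{y_2>0\}$ and $=0$ on the boundary, so its interior normal derivative cannot be negative and, being nonzero, is strictly positive. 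For the crucial ratio inequality, recall $\tilde b^2=b^j\partial_j\phi+a^{ij}\partial_{ij}\phi$; on $\partial\Omega$ this is precisely the Fichera-type numerator appearing in \eqref{a101}, namely $b^i\partial_i\phi-\partial_j a^{ij}\partial_i\phi$ modulo terms that are already accounted for, while $A^{22}|_{y_2=0}$ is the denominator $\partial_k a^{ij}\partial_i\phi\partial_j\phi\,\phi^k$ up to the same normalization by $|\nabla\phi|^2$. Hence the ratio $\tilde b^2/A^{22}$ restricted to $\{y_2=0\}$ equals the function $g(x)$ of the Remark evaluated at the relevant boundary point plus $1$ worth of bookkeeping, and the hypothesis $b=\inf_{\partial\Omega} g>1$ in \eqref{a101} — together with $\bar x\in\partial\Omega$ and continuity, so that the boundary point $(p_1,-y_2^k/\mu_k^2)$ corresponds to a point near $\bar x$ where $g>1$, i.e. $g+1>2$ after the normalization — yields $\tilde b^2/A^{22}>2$ for $k$ large and $\delta$ small.

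The main obstacle I expect is bookkeeping rather than conceptual: one must verify carefully that after the two coordinate changes the ratio $\tilde b^2/A^{22}$ on the rescaled boundary is \emph{exactly} $g(\cdot)+1$ (and not $g$ or $g/2$ or some other affine reparametrization), since the $2$ in $\tilde b^2/A^{22}>2$ has to match the hypothesis $b>1$ on the Fichera quotient, with the discrepancy of $1$ coming from the change from the second-order term $y_2\partial_{y_2}^2$ to the scaled form and the extra factor $2$ from $\partial_{y_2}(y_2 A^{22})=A^{22}+y_2\partial_{y_2}A^{22}$ evaluated against the way $\tilde b^2$ transforms. Concretely I would: (i) compute $\tilde b^2$ and $\partial_{y_2}\tilde a^{22}$ on $\{y_2=0\}$ in terms of $a^{ij},b^i,\phi$ and their derivatives, using $\partial_1\phi(\bar x)=0$, $\partial_2\phi(\bar x)\ne 0$ to simplify; (ii) divide and compare with the definition of $g(x)$ in \eqref{a101}, tracking the normalization $\phi^k=\partial_k\phi/|\nabla\phi|^2$; (iii) invoke $\inf_{\partial\Omega} g>1$ and uniform continuity to get the strict inequality, uniformly in $k$, on $\bar H_k$ near the boundary. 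The positivity and lower bounds of the other coefficients are then immediate from continuity and the already-proven facts $a^{11}(\bar x)>0$, $\nabla(a^{ij}\partial_i\phi\partial_j\phi)\ne 0$.
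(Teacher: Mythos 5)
Your overall strategy coincides with the paper's: $\tilde a^{22}$ (and $\tilde a^{12}$) vanish on $\{y_2=0\}$, a Hadamard/Taylor factorization in $y_2$ then produces $A^{22},A^{12}\in C^1$ after the rescaling; $A^{22}>0$ on the boundary because $\nabla(a^{ij}\partial_i\phi\partial_j\phi)\neq 0$ there while $\tilde a^{22}\geq 0$ in the interior; and the ratio bound is reduced to hypothesis \eqref{a101}. In fact the paper only writes out the $\tilde a^{22}$ part, via $A^{22}(y)=\int_0^1\partial_{y_2}(a^{ij}\partial_i\phi\partial_j\phi)(y_1,ty_2)\,dt$, and disposes of the last inequality with the single phrase that it ``follows from \eqref{a101}''; your Cauchy--Schwarz argument for the vanishing of $\tilde a^{12}=a^{1j}\partial_j\phi$ on the boundary supplies a step the paper leaves implicit.

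The one place your reasoning goes astray is the source of the ``$+1$'': it has nothing to do with the $p$-rescaling nor with $\partial_{y_2}(y_2A^{22})=A^{22}+y_2\partial_{y_2}A^{22}$. The bookkeeping that actually closes step (i)--(iii) of your plan is the following identity on $\{y_2=0\}$. Since the vector field $w^j:=a^{ij}\partial_i\phi$ vanishes identically on $\partial\Omega$ (your own Cauchy--Schwarz observation), each $\nabla w^j$ is parallel to $\nabla\phi$ there; consequently $\partial_jw^j=\partial_k(a^{ij}\partial_i\phi\partial_j\phi)\,\phi^k=:D$ on $\partial\Omega$ (and $D$ coincides with the denominator in \eqref{a101}, since $a^{ij}\partial_j\phi=0$ kills the Hessian terms), while $\nabla(a^{ij}\partial_i\phi\partial_j\phi)=D\,\nabla\phi$ gives $A^{22}|_{y_2=0}=\partial_{y_2}\tilde a^{22}=\tfrac{1}{\partial_2\phi}\,\partial_{x_2}(a^{ij}\partial_i\phi\partial_j\phi)=D$ at every boundary point of the chart, not only at $\bar x$ where $\partial_1\phi=0$. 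Then, writing $a^{ij}\partial_{ij}\phi=\partial_j(a^{ij}\partial_i\phi)-\partial_ja^{ij}\,\partial_i\phi$, one obtains $\tilde b^2=b^j\partial_j\phi+a^{ij}\partial_{ij}\phi=(b^j\partial_j\phi-\partial_ja^{ij}\partial_i\phi)+D=(g+1)D$ on $\{y_2=0\}$, hence $\tilde b^2/A^{22}=g+1\geq b+1>2$ by \eqref{a101}. With this identity your outline is complete; the heuristic you proposed for the extra $1$ would not have survived the computation you set out to do.
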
\begin{proof}
Noting $\tilde{a}^{22}=a^{ij}\partial_i\phi\partial_j\phi=0$ on $\{y_2=0\}$, we get \begin{eqnarray}\tilde{a}^{22}(\bar y)&=&\int_0^1\frac{d(a^{ij}\partial_i\phi\partial_j\phi)(
\bar y_1,t\bar y_2)}{dt}dt=\bar y_2\int_0^{ 1}\partial_{y_2}(a^{ij}\partial_i\phi\partial_j\phi)(\bar y_1,t\bar y_2)dt \nonumber\\&=&A^{22}\mu^2_k(\bar p_2+\frac{y^k_2}{\mu_k^2}),\text{ where }A^{22}=\int_0^{1}\partial_{y_2}(a^{ij}\partial_i\phi\partial_j\phi)(\bar y_1,t\bar y_2)dt.\end{eqnarray}From \eqref{666}, we see that $\nabla(\tilde a^{22})\neq 0$ on $\{y_2=0\}$ which implies that $\partial_{y_2}\tilde a^{22}(y_1,0)>0$ or $A^{22}(p_1,-\frac{y_2^k}{\mu_k^2})>0$ in the region considered. The $C^1$ property of $A^{22}$ follows from the $C^2$ property of $a^{ij},\phi$ immediately. The last term in \eqref{a111} follows from \eqref{a101}.

\end{proof}

Dividing both sides of \eqref{606} by $A^{22}$, one can get in $H_k$ \begin{eqnarray}\label{402}
(p_2+\frac{y_2^k}{\mu_k^2})\frac{\partial^2 v^k}{\partial p_{2}^2}+
\bar a^{11}\frac{\partial^2 v^k}{\partial
p_1^2}&+&2\mu_k (p_2+\frac{y_2^k}{\mu_k^2})\bar a^{12}\frac{\partial^2 v^k}{\partial
p_1\partial p_{2}}\nonumber\\&+&\mu_k\bar b^1\frac{\partial v^k}{\partial
p_1}+\bar b^2\frac{\partial v^k}{\partial
p_{2}}+\mu_k^{\frac{2\alpha}{\alpha-1}}g(p,\mu_k^{-\frac{2}{\alpha-1}}v^k)=0.
\end{eqnarray}where $$\bar{a}^{11}=\displaystyle\frac{\tilde a^{11}}{A^{22}}, \bar a^{12}=\frac{A^{12}}{A^{22}},\bar b^i=\frac{\tilde b^i}{A^{22}},\bar f=\frac{f}{A^{22}}.$$ We must take care of the limit of $\frac{y_2^k}{\mu_k^2}$.

\underline{Case 2.1}: $\underset{k\rightarrow}{\lim}\frac{y_2^k}{\mu_k^2}=\infty$. We take $q_1=p_1,q_2=2\sqrt{p_2+\frac{y_2^k}{\mu_k^2}}-2\sqrt{\frac{y_2^k}{\mu_k^2}}$, then \eqref{402} changes to \begin{eqnarray}\label{d03}
\frac{\partial^2 v^k}{\partial q_{2}^2}&+&
\bar a^{11}\frac{\partial^2 v^k}{\partial
q_1^2}+ \mu_k \left(q_2+2\sqrt{\frac{y_2^k}{\mu_k^2}}\right)\bar a^{12}\frac{\partial^2 v^k}{\partial q_1\partial q_{2}}\nonumber\\&+&\mu_k\bar b^1\frac{\partial v^k}{\partial
q_1}+\frac{2\bar b^2-1}{q_2+2\sqrt{\frac{y_2^k}{\mu_k^2}}}\frac{\partial v^k}{\partial
q_{2}}+\mu_k^{\frac{2\alpha}{\alpha-1}}\bar f(q,\mu_k^{-\frac{2}{\alpha-1}}v^k)=0,\text{ in } J_k.
\end{eqnarray}
It is important to show that $J_k$ can be chosen arbitrarily large as $k\rightarrow \infty$. Since $p\in H_k$, it follows that \begin{eqnarray}
&&q_1^2+\left[\left(\frac{q_2}{2}+\sqrt{\frac{y_2^k}{\mu_k^2}}\right)^2-
\frac{y^k_2}{\mu_k^2}\right]^2<\frac{\delta^2}{\mu_k^2}\iff q_1^2+q_2^2\left(\frac{q_2}{4}+\sqrt{\frac{y_2^k}{\mu_k^2}}\right)^2<\frac{\delta^2}{\mu_k^2}
\nonumber\\&&\Rightarrow q_2<\frac{2\delta}{\sqrt{y_2^k}} \text{ noting that } q_2>-2\sqrt{\frac{y_2^k}{\mu_k^2}}.
\end{eqnarray}From $y_2^k\rightarrow 0$ and $\frac{y_2^k}{\mu_k^2}\rightarrow \infty$, one can get\begin{equation}
\frac{\delta^2}{\mu_k^2}=4l_k^2{\max} ^2\left\{\frac{\delta}{2\sqrt{y_2^k}},\sqrt{\frac{y_2^k}{\mu_k^2}}\right\},l_k\rightarrow
\infty\text{ as }k\rightarrow \infty.
\end{equation}Since \begin{eqnarray}
q_1^2+q_2^2\left(\frac{q_2}{4}+\sqrt{\frac{y_2^k}{\mu_k^2}}\right)^2\leq q_1^2+4{\max} ^2\left\{\frac{\delta}{2\sqrt{y_2^k}},\sqrt{\frac{y_2^k}{\mu_k^2}}\right\}q_2^2,
\end{eqnarray} we can take $J_k=B_{l_k}(0)\cap\{q_2>-2\sqrt{\frac{y_2^k}{\mu_k^2}}\}$.

Also, we have $v^k(0)\geq \frac 12$. As for any $R$, we can choose $k$ large enough such that $B_R(0)\subset J_k$. Thus \eqref{d03} is uniformly elliptic in $B_R(0)$ with uniformly bounded coefficients. This allows us to follow the same steps in Case 1. Namely, passing to limit $k\rightarrow\infty$, we have \begin{equation}\label{d04}
\frac{\partial^2v}{\partial q^2_2}+\bar{a}^{11}(\bar x)\frac{\partial^2v}{\partial q_1^2}+\frac{h(\bar x)}{\partial_{y_2}(a^{ij}\phi_i\phi_j)(\bar x)}v^\alpha=0\text{ in } R^2, v(0)\geq \frac 12,
\end{equation}if we notice that for $q\in B_R(0)$, \begin{equation}
|\mu_k(q_2+2\sqrt{\frac{y_2^k}{\mu_k^2}})|+|\frac{2\bar b^2-1}{q_2+2\sqrt{\frac{y_2^k}{\mu_k^2}}}|\leq \mu_kR+2\sqrt{y^k_2}+\frac{C\mu_k}{\sqrt{y^k_2}}\rightarrow 0\text{ as }k\rightarrow \infty.
\end{equation} Therefor, \eqref{d04} gives rise a contradiction.

\underline{Case 2.2}:  $\underset{k\rightarrow}{\lim}\frac{y_2^k}{\mu_k^2}=c<\infty$.

First of all, we  establish a lemma of weighted-$L^2$ estimates. Let $\psi\in C_c^{\infty}(R^{2})$ be
 a cutoff function with $\psi(p)=1 $ as $|p|\leq  1/2 $ and $\psi=0$ as $|p|\geq 1$. Set $\psi_r(p)=\psi(\frac pr)$.\begin{lemma}
\label{lem402} Suppose $u\in C^2(R^2_+)\cap L^\infty(R^2_+)$ solves \eqref{403},\begin{equation}
\label{403}p_2\frac{\partial^2 u}{\partial p_{2}^2}+
B^{11}\frac{\partial^2 u}{\partial
p_1^2}+2p_2B^{12}\frac{\partial^2 u}{\partial
p_1\partial p_{2}}+B^1\frac{\partial u}{\partial
p_1}+B^2\frac{\partial u}{\partial
p_{2}}+f=0\text{ in } R^2_+,
\end{equation}with $B^{ij},B^i\in C^1(\overline{R^2_+}),f\in L^\infty(R^2_+)$ and $B^{11}(p_1,0)\geq c_0>0$. Then for $r$ suitable small, we have \begin{equation}\label{b111}
\|p_2^{\frac 12}\psi_ru_{p_2}\|_{L^2}+\|\psi_ru_{p_1}\|_{L^2}\leq C(r,\|\psi_rB^{ij}\|_{C^1},\|\psi_rB^i\|_{C^1},\|\psi_rf\|_{L^\infty},
\|\psi_ru\|_{L^\infty}).
\end{equation}
\end{lemma}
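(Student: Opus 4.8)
The plan is to derive the weighted energy estimate by testing the equation \eqref{403} against $\psi_r^2 u$ and integrating by parts, being careful about the degeneracy on $\{p_2=0\}$ and the mixed second-order term. First I would rewrite the principal part in a more divergence-friendly form: note that $p_2 u_{p_2p_2} + 2p_2 B^{12}u_{p_1p_2} + B^{11}u_{p_1p_1}$ can be grouped, modulo lower-order terms that are absorbed into the $B^i$'s (which only costs extra $C^1$-norm dependence, already allowed on the right-hand side), as $\partial_{p_2}(p_2 u_{p_2}) + \partial_{p_1}(B^{11}u_{p_1}) + \partial_{p_1}(2p_2 B^{12}u_{p_2})$ plus terms of the form $(\text{bounded})\cdot u_{p_1} + (\text{bounded})\cdot u_{p_2}$ and, crucially, a term $-u_{p_2}$ coming from $\partial_{p_2}(p_2 u_{p_2}) = p_2 u_{p_2p_2} + u_{p_2}$; this extra $u_{p_2}$ is harmless since $p_2$ is bounded on $\operatorname{supp}\psi_r$ — no wait, that does not bound $u_{p_2}$. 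Rather, I keep $p_2 u_{p_2p_2}$ as is and test directly.

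Concretely, multiply \eqref{403} by $\psi_r^2 u$ and integrate over $R^2_+$. Integration by parts on $\int \psi_r^2 u\, p_2 u_{p_2p_2}$ gives $-\int p_2 (\psi_r^2 u)_{p_2} u_{p_2} = -\int p_2 \psi_r^2 u_{p_2}^2 - \int p_2 (\psi_r^2)_{p_2} u u_{p_2}$, with no boundary term at $p_2=0$ because of the weight $p_2$ (and the cutoff kills the contribution at infinity). Similarly $\int \psi_r^2 u\, B^{11}u_{p_1p_1}$ produces $-\int B^{11}\psi_r^2 u_{p_1}^2$ plus terms with $u\, u_{p_1}$ against $\partial_{p_1}(B^{11}\psi_r^2)$. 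The mixed term $\int 2p_2 B^{12}\psi_r^2 u\, u_{p_1p_2}$ I would integrate by parts in $p_1$, writing $u u_{p_1p_2} = \partial_{p_1}(u u_{p_2}) - u_{p_1}u_{p_2}$, yielding $-\int 2p_2 B^{12}\psi_r^2 u_{p_1}u_{p_2}$ plus terms with $u u_{p_2}$ against $\partial_{p_1}(2p_2 B^{12}\psi_r^2)$; the cross term $\int 2p_2 B^{12}\psi_r^2 u_{p_1}u_{p_2}$ is bounded by $\eta\int p_2\psi_r^2 u_{p_2}^2 + C_\eta\int p_2 (B^{12})^2\psi_r^2 u_{p_1}^2$, and since $p_2\leq r$ on the support, for $r$ small this last term is absorbed into the good term $\int B^{11}\psi_r^2 u_{p_1}^2$ using $B^{11}\geq c_0$. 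The remaining first-order terms $B^1 u_{p_1}, B^2 u_{p_2}$ tested against $\psi_r^2 u$, and the source $\int \psi_r^2 u f$, are all estimated by Cauchy–Schwarz, again absorbing the $u_{p_1}$-piece into $c_0\int\psi_r^2 u_{p_1}^2$ and the $u_{p_2}$-piece — after writing $\psi_r^2 u B^2 u_{p_2} \leq \eta p_2 \psi_r^2 u_{p_2}^2 + C_\eta \psi_r^2 u^2 (B^2)^2/p_2$ — hmm, the division by $p_2$ is a problem.

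The honest obstacle, and the step I expect to be the heart of the matter, is precisely controlling the first-order $p_2$-derivative term $B^2 u_{p_2}$, which is \emph{not} weighted by $p_2$ in the equation while the energy only controls $p_2^{1/2}u_{p_2}$. The resolution should use the structural hypothesis $B^{11}(p_1,0)\ge c_0$ together with the specific coefficient structure: in the application \eqref{402} one has $B^{11}=\bar a^{11}$ bounded below, $B^{12}=p_2\bar a^{12}$ vanishing at $p_2=0$, and $B^2=\bar b^2$ with $\bar b^2(p_1,0)>2$ by \eqref{a111}; so I would instead test against $\psi_r^2 u$ but split $\int \psi_r^2 u B^2 u_{p_2} = \int \psi_r^2 B^2 (u^2/2)_{p_2} = -\int (\psi_r^2 B^2)_{p_2}u^2/2$, turning the dangerous term into a \emph{zeroth-order} term in $u$ with no derivative at all — this is the key algebraic miracle that makes the estimate close, costing only $\|\psi_r B^2\|_{C^1}$ and $\|\psi_r u\|_{L^\infty}$ on the right. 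With that substitution every term is either a good negative-definite quadratic form ($-\int p_2\psi_r^2 u_{p_2}^2 - \int c_0\psi_r^2 u_{p_1}^2$), a cross term absorbable by Young's inequality and smallness of $r$, or a bounded zeroth-order remainder; rearranging yields \eqref{b111}. I would finally remark that the constant's dependence is exactly as listed, and that $r$ small is used only to absorb the $p_2 (B^{12})^2$ cross term and (if needed) to ensure $B^{11}\ge c_0/2$ throughout $\operatorname{supp}\psi_r$ by continuity.
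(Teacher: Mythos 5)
Your overall strategy is the same as the paper's: test the equation against (cutoff)$\cdot u$, integrate by parts, convert the dangerous first-order term $\int \psi^2 u\,B^2 u_{p_2}$ into a zeroth-order term via $u\,u_{p_2}=\tfrac12\partial_{p_2}(u^2)$ (this is indeed the step that makes the estimate close, and it is exactly what the paper does, producing the term $-\tfrac12\partial_{p_2}(B^2\psi)u^2$), absorb the mixed term $2p_2B^{12}u_{p_1}u_{p_2}$ using Young's inequality and the smallness of $p_2\le r$ against $B^{11}\ge c_0$, and handle $B^1u_{p_1}$ and $f$ by Cauchy--Schwarz. All of that matches the paper's computation.

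The genuine gap is your treatment of the degenerate boundary $\{p_2=0\}$. You assert that the integration by parts in $p_2$ produces ``no boundary term at $p_2=0$ because of the weight $p_2$.'' But the hypothesis is only $u\in C^2(R^2_+)\cap L^\infty(R^2_+)$: there is no control of $u_{p_2}$ or $u_{p_2p_2}$ as $p_2\to 0^+$, so $p_2u_{p_2}$ need not tend to zero (think of bounded functions with $u_{p_2}\sim \cos(\log p_2)/p_2$), the integral $\int \psi_r^2u\,p_2u_{p_2p_2}$ need not even be absolutely convergent, and the boundary term coming from the $B^2$-trick, $\tfrac12\int_{\{p_2=0\}}\psi_r^2B^2u^2$, carries no $p_2$ weight at all and requires a trace of $u$ that is not available. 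Since the whole point of the lemma is to \emph{derive} control of $p_2^{1/2}u_{p_2}$ near the boundary, one cannot presuppose any boundary behavior. The paper resolves this by inserting a second cutoff $\eta_\epsilon(p_2)$ vanishing for $p_2<\epsilon$ and equal to $1$ for $p_2>2\epsilon$, with $|D^j\eta_\epsilon|\le C_j\epsilon^{-j}$, testing with $\psi_r\eta_\epsilon u$, and integrating by parts so that every derivative of the cutoff lands on $u^2$; the extra terms, e.g. $\int|\partial_{p_2}\eta_\epsilon|\psi_ru^2\le C\epsilon^{-1}\cdot\epsilon$ and $\int p_2|\partial^2_{p_2}\eta_\epsilon|\psi_ru^2\le C\epsilon^{-2}\int_\epsilon^{2\epsilon}p_2\,dp_2$, are bounded uniformly in $\epsilon$ using only $\|\psi_{r}u\|_{L^\infty}$, and one then lets $\epsilon\to0$. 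Your argument becomes correct once you add this regularization (or an equivalent limiting argument on $\{p_2>s\}$ with a justification of the boundary contributions as $s\to0$); as written, the passage to integrals over all of $R^2_+$ is not justified.
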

\begin{proof}
Set $\eta_{\epsilon}(p_2)\in C^\infty(R^1_+)$ that\begin{equation}
\eta_{\epsilon}(p_2)=\begin{cases} 0,\ 0<p_2<\epsilon\\ 1, \
p_2>2\epsilon,
\end{cases}
\end{equation}with $|D^j\eta_\epsilon|\leq C_j\epsilon^{-j}$ for $p_2\in (\epsilon,2\epsilon)$.   Denote $\psi_{r,\epsilon}=\psi_r\eta_\epsilon$. Multiplying both sides of \eqref{403} by $\psi_{r,\epsilon}u$  and integrating by parts, we can get \begin{eqnarray}\label{404}&&
\int \psi_{r,\epsilon}p_2\left(\frac{\partial u}{\partial p_2}\right)^2+\int B^{11}\psi_{r,\epsilon}\left(\frac{\partial u}{\partial p_1}\right)^2\nonumber\\&=&
\int \left(\frac{\partial \psi_{r,\epsilon}}{\partial p_2}+\frac 12p_2\frac{\partial^2 \psi_{r,\epsilon}}{\partial p_2^2}-\frac 12\frac{\partial(B^2\psi_{r,\epsilon})}{\partial p_2}\right)u^2+\int\left(B^1\psi_{r,\epsilon}-\frac{\partial(B^{11}\psi_{r,\epsilon})}
{\partial p_1}\right)u\frac{\partial u}{\partial p_1}\nonumber\\&-&2\int p_2\frac{\partial(B^{12}\psi_{r,\epsilon})}{\partial p_1}u\frac{\partial u}{\partial p_2}-2\int p_2B^{12}\psi_{r,\epsilon}\frac{\partial u}{\partial p_1}\frac{\partial u}{\partial p_2}.
\end{eqnarray}Now we estimate the terms on the right side of \eqref{404}. The first term,\begin{eqnarray}
\left|\int \frac{\partial \psi_{r,\epsilon}}{\partial p_2}u^2\right|\leq \int |\partial_{p_2}\psi_r|\eta_\epsilon u^2+\int |\partial_{p_2}\eta_\epsilon| \psi_r u^2\leq C_1+C_2\int_\epsilon^{2\epsilon}\epsilon^{-1}dp_2\leq C,
\end{eqnarray}where $C$ is a constant only depending on the quantities in \eqref{b111}.  Also\begin{eqnarray}
\left|\int p_2\frac{\partial\psi_{r,\epsilon}}{\partial p_2^2}u^2\right|&\leq& \int p_2|\partial^2_{p_2}\psi_r|\eta_\epsilon u^2+2\int p_2|\partial_{p_2}\psi_r\partial_{p_2}\eta_\epsilon|u^2+\int p_2\psi_r|\partial^2_{p_2}\eta_\epsilon|u^2\nonumber\\&\leq&C_1+\epsilon^{-2}
\int_{\epsilon}^{2\epsilon}p_2dp_2\leq C
\end{eqnarray} The second term,\begin{equation}
\left|\int B^1\psi_{r,\epsilon}u\frac{\partial u}{\partial p_1}\right|\leq \delta\int \psi_{r,\epsilon}\left(\frac{\partial u}{\partial p_1}\right)^2+\frac 1{4\delta}\int (B^1)^2\psi_{r,\epsilon}u^2.
\end{equation}The last term,\begin{eqnarray}
\left|\int p_2B^{12}\psi_{r,\epsilon}\frac{\partial u}{\partial p_1}\frac{\partial u}{\partial p_2}\right|&\leq& \int p_2^{\frac 32}\psi_{r,\epsilon}|B^{12}|\left(\frac{\partial u}{\partial p_2}\right)^2+\int p_2^{\frac 12}\psi_{r,\epsilon}|B^{12}|\left(\frac{\partial u}{\partial p_1}\right)^2\nonumber\\&\leq& Cr^\frac 12\left(\int \psi_{r,\epsilon}p_2\left(\frac{\partial u}{\partial p_2}\right)^2+\int \psi_{r,\epsilon}\left(\frac{\partial u}{\partial p_1}\right)^2\right).
\end{eqnarray}Combining the above estimates and choosing suitable $r,\delta$, one can get \begin{equation}
\int \psi_{r,\epsilon}p_2\left(\frac{\partial u}{\partial p_2}\right)^2+\int \psi_{r,\epsilon}\left(\frac{\partial u}{\partial p_1}\right)^2\leq C
\end{equation}for  some constant $C$ independent of $\epsilon$. Passing the limit $\epsilon\rightarrow 0$, we have finished the proof of the present lemma.
\end{proof}

 Now we can complete the proof of Case 2.2. Replacing $p_2+\frac{y_2^k}{\mu^2_k}$ by $p_2$, still denote it by $p_2$. Then by Lemma \ref{lem401}, one can get $\bar b^2(p_1,0)\geq b>2$. All the coefficients of \eqref{402} are $C^1(\overline{R^2_+})$ with $\mu_k^{\frac{2\alpha}{\alpha-1}}\bar f(p,\mu_k^{-\frac{2}{\alpha-1}}v^k)\in L^\infty$, this means all the requirements in Lemma \ref{lem402} are fulfilled.
From Lemma \ref{lem401}, Lemma \ref{lem402} and the regularity results of Theorem \ref{thm601} and the standard regularity results for non-degenerate elliptic equations, one can choose a suitable subsequence
such that $v^k(0',-\frac{y_2^k}{\mu_k^2})\rightarrow v(0',c)\geq \frac 12$ and also
$v^k\rightarrow v$ in the distribution sense in $\mathscr{D}'(R^2_+)$ and $v$
satisfies\begin{equation}
(p_2+c)\partial_{y_2}(a^{ij}\phi_i\phi_j)(\bar x)\frac{\partial^2
v}{\partial p_2^2}+a^{11}(\bar x)\frac{\partial^2 v}{\partial
p_1^2}+(b^i\phi_i+a^{ij}\partial_{ij}\phi)(\bar x)\frac{\partial v}{\partial
p_2}+h(\bar x)v^\alpha=0, \text{ in }
R^2_+,
\end{equation}
where $0\leq c=\underset{k\rightarrow
\infty}{\lim}\frac{y^k_2}{\mu_k^2}<\infty$. By a linear change of coordinates and a stretching of
coordinates, we have that\begin{equation}
\begin{cases}
p_2v_{p_2p_2}+v_{p_1p_1}+\bar{b}v_{p_2}+v^\alpha=0 \text{ in }R^2_+,\\\
0\leq v\in C^2(R^2_+)\cap C(\overline{R^2_+}), v(0,c)=c_0>0.
\end{cases}
\end{equation} From the assumption of Theorem \ref{thm501}, it
follows that $2<\bar{b}\leq a$ and $$
\alpha<\frac{2a+3}{2a-1}\leq \frac{2\bar b+3}{2\bar b-1}.$$
By Theorem \ref{thm601}, we see that $v\in C^2(\overline{R^2_+})$ and have
$v\equiv 0$ which follows from Theorem \ref{thm1}. This is a
contradiction to $v(0,c)>0$. This ends the proof of Theorem \ref{thm501}.

\section{Appendix}
\label{sec:4}
In the present Appendix, we shall give a result about the regularity of solutions to some degenerate elliptic equation in \cite{HH}. For the convenience of readers, we shall give a brief proof for it.
We shall use the notations in \cite{HH}.
Define $I_q(v)$ and $I_\beta(v)$ by:\begin{equation}
I_q(v)=\|y\partial_{yy}v\|_{L^q(R^{n+1}_+)}+\|\Lambda_1^2v\|_{L^q(R^{n+1}_+)}+
\|y^{\frac{1}{2}}\Lambda_1
v_y\|_{L^q(R^{n+1}_+)}+\|v_y\|_{L^q(R^{n+1}_+)}+\|v\|_{L^q(R^{n+1}_+)},
\end{equation}\begin{equation}
I_\beta(v)=[y\partial_{yy}v]_{\dot{C}^\beta(\overline{R^{n+1}_+})}+[
\Lambda_1^2v]_{\dot{C}^\beta(\overline{R^{n+1}_+})}
+[y^{\frac{1}{2}}\Lambda_1
v_y]_{\dot{C}^\beta(\overline{R^{n+1}_+})}+[v_y]_{\dot{C}^\beta(\overline{R^{n+1}_+})}+
\|v\|_{L^\infty(R^{n+1}_+)},
\end{equation}where $\Lambda_1$ is a singular integral operator with the symbol
$\sigma(\Lambda_1)=|\xi|$. Also we say a function $v(x, y)$ in $\dot{C}^\alpha( \overline{
R^{n+1}_+})$, $\alpha\in R_+^1\backslash
Z$, if
\begin{equation}|v|_{\dot{C}^\alpha(\overline{R^{n+1}_+})}=
\sum_{|\beta|\le
[\alpha]}|D^{\beta}v|_{C(\overline{R^{n+1}_+})}+
[v]_{\dot{C}^{\alpha}(\overline{R^{n+1}_+})}<\infty,
\end{equation}
where \begin{equation}[v]_{\dot{C}^\alpha(\overline{R^{n+1}_+})}
=\sum_{|\beta|=[\alpha]}\underset{
y\ge 0, x\neq\bar{x}\in R^n}{sup}\Big(\frac{| D_x^\beta v(x, y)-D_x^\beta
v(\bar{x}, y)|}{|x-\bar{x}|^{\alpha}}\Big).\end{equation}
 Let $\psi\in C_c^{\infty}(R^{n+1})$ be
 a cutoff function with $\psi(x,y)=1 $ as $|x|\le 1/2 $, $y\le 1/2 $ and $\psi=0$ as $|x|\ge 1$
 or $|y|\ge 1$. Set $\psi_r(x,y)=\psi(\frac xr,\frac yr)$.

\begin{lemma}\label{lem602}(Lemma 5.4 in \cite{HH}) Suppose that $u\in
C^2(R_+^{n+1})\bigcap L^p(R_+^{n+1})$ with $u_x,
yu_y\in L^p(R_+^{n+1})$ satisfies
\begin{equation}\begin{split}\label{611}
L(u)=yu_{yy}+\sum_{i,
j}a_{ij}u_{x_ix_j}+y\sum_ja_ju_{yx_j}+\sum_jb_ju_{x_j}+bu_y=f,
\text{in} \   R_+^{n+1}, \end{split}
\end{equation}where $a_{ij}, a_j, b_j, b$ are all in
$C(\overline{R_+^{n+1}})$ with $a_{ij}(0)=\delta_{ij},
b(0)>\frac 32$, $f\in L^\infty(R^{n+1}_+)$ and that for some
$\epsilon>0$,
\begin{equation}\label{612}
\lim_{y\rightarrow 0}y^{b(0)-1-\epsilon}u(x, y)=0\text { uniformly
for all }x\in R^n.
\end{equation}
Then for sufficiently large $p$, there are $r=r(p)>0$ such that
\begin{equation} I_p(\psi_ru)\leq C_r,
\end{equation}for some constant depending only on
$p, \|\psi_{2r}f\|_{L^p}, \|\psi_{2r}u\|_{L^p},
\|\psi_{2r}u_x\|_{L^p}$ and $\|y\psi_{2r}u_y\|_{L^p}$ provided that
$p>n+1$ or $p>\frac {n+1}2 $ and $b(0)-2-\epsilon>0$.
\end{lemma}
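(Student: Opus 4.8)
The plan is to localize with the cutoff $\psi_r$, freeze the coefficients at the origin to reduce to a model degenerate operator, prove the estimate for that model by Fourier analysis in $x$, and then absorb everything else as a small perturbation by taking $r$ small. First I would set $v=\psi_r u$, so that $L(v)=\psi_r f+[L,\psi_r]u$. Each term of the commutator $[L,\psi_r]u$ is of order lower than $L$ with respect to the natural parabolic scaling $|x|^2\sim y$ and is supported in the annulus $\{r/2\le|(x,y)|\le r\}$, where $y\le r$, so that the weighted quantities $y\,\partial_y\psi_r$ and $y\,\partial_{yy}\psi_r$ stay bounded; one gets
\begin{equation*}
\|[L,\psi_r]u\|_{L^p}\le C(r)\big(\|\psi_{2r}u\|_{L^p}+\|\psi_{2r}u_x\|_{L^p}+\|y\psi_{2r}u_y\|_{L^p}\big).
\end{equation*}
Since $v=\psi_r u$ still satisfies the decay hypothesis \eqref{612} and is supported in $B_r$, it suffices to prove: if $v\in C^2(R^{n+1}_+)$ is supported in $B_r$, satisfies \eqref{612}, and solves $Lv=g$ with $g\in L^p$ supported in $B_r$, then $I_p(v)\le C\big(\|g\|_{L^p}+\|v\|_{L^p}\big)$ with $C$ independent of $v,g$.

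Next, write $L=L_0+(L-L_0)$, where $L_0=y\partial_{yy}+\Delta_x+b(0)\partial_y$ is the frozen model operator and
\begin{equation*}
(L-L_0)v=\sum_{i,j}(a_{ij}-\delta_{ij})v_{x_ix_j}+y\sum_j a_j v_{yx_j}+\sum_j b_j v_{x_j}+(b-b(0))v_y.
\end{equation*}
The core is the model estimate
\begin{equation}\label{modelest}
I_p(w)\le C_0\big(\|L_0 w\|_{L^p}+\|w\|_{L^p}\big)
\end{equation}
for all $w$ supported in $B_r$ satisfying the decay property \eqref{612}. To prove \eqref{modelest} I would take the Fourier transform in $x$, so $\widehat{L_0 w}=y\widehat w_{yy}+b(0)\widehat w_y-|\xi|^2\widehat w$. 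For fixed $\xi\ne0$ this ODE has indicial exponents $0$ and $1-b(0)$ at $y=0$; as $b(0)>3/2$ the second is negative, and \eqref{612} excludes the corresponding singular branch $\sim y^{1-b(0)}$, so $\widehat w(\xi,\cdot)$ is the unique solution regular at $0$ and decaying at $\infty$, given by a Green's function $\widehat w(\xi,y)=\int_0^\infty G(\xi;y,s)\widehat g(\xi,s)\,ds$, $g=L_0 w$. The rescaling $y\mapsto|\xi|^2 y$ collapses $G$ to a single $\xi$-independent profile, and reconstructing $y\partial_{yy}w$, $\Lambda_1^2 w$, $y^{1/2}\Lambda_1 w_y$, $w_y$, $w$ from $g$ becomes a family of convolution-type operators whose kernels satisfy Calderon--Zygmund bounds with respect to the parabolic scaling, yielding their $L^p$-boundedness. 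The restrictions $p>n+1$, or $p>\frac{n+1}2$ with $b(0)-2-\epsilon>0$, are used precisely here: combined with the a priori information $v,v_x,yv_y\in L^p$ (hence $yv_{yy}\in L^p$ by the equation), they force, via Sobolev embedding, enough pointwise regularity of $v$ up to $\{y=0\}$ to justify the limiting argument that identifies $v$ with this regular solution.

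Finally I would run the perturbation. By \eqref{modelest} and $L_0 v=g-(L-L_0)v$,
\begin{equation*}
I_p(v)\le C_0\|g\|_{L^p}+C_0\|(L-L_0)v\|_{L^p}+C_0\|v\|_{L^p}.
\end{equation*}
On $B_r$, continuity of the coefficients gives $|a_{ij}-\delta_{ij}|,|b-b(0)|\le\omega(r)\to0$, and since $\|v_{x_ix_j}\|_{L^p}\le C\|\Lambda_1^2 v\|_{L^p}$ and $\|v_y\|_{L^p}$ are parts of $I_p(v)$, these two groups of terms are $\le C\omega(r)\,I_p(v)$. For the genuinely lower-order terms, $y\le r$ on $B_r$ and $\|v_{yx_j}(\cdot,y)\|_{L^p_x}\le C\|\Lambda_1 v_y(\cdot,y)\|_{L^p_x}$ (since $\partial_{x_j}=R_j\Lambda_1$ with $R_j$ a bounded Riesz transform acting in $x$ only), so
\begin{equation*}
\big\|y\,a_j v_{yx_j}\big\|_{L^p}\le Cr^{1/2}\big\|y^{1/2}v_{yx_j}\big\|_{L^p}\le Cr^{1/2}\big\|y^{1/2}\Lambda_1 v_y\big\|_{L^p}\le Cr^{1/2}I_p(v),
\end{equation*}
while $\|b_j v_{x_j}\|_{L^p}\le C\|\Lambda_1 v\|_{L^p}\le\delta\|\Lambda_1^2 v\|_{L^p}+C_\delta\|v\|_{L^p}$ by the interpolation inequality $\|\Lambda_1 v\|_{L^p}\le C\|\Lambda_1^2 v\|_{L^p}^{1/2}\|v\|_{L^p}^{1/2}$. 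Choosing $\delta$ small and then $r$ so small that $C_0(\omega(r)+Cr^{1/2}+\delta)\le\frac12$, I absorb these terms into the left side and obtain $I_p(v)\le 2C_0\|g\|_{L^p}+C\|v\|_{L^p}$; together with the first step (so that $g=\psi_r f+[L,\psi_r]u$ and $\|v\|_{L^p}\le\|\psi_{2r}u\|_{L^p}$) this gives the stated bound.

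The hard part is the model estimate \eqref{modelest}: the degenerate factor $y$ in $y\partial_{yy}w$ and the half-power weight in $y^{1/2}\Lambda_1 w_y$ must exactly cancel the singularity of the Green's function $G(\xi;y,s)$ as $y\to0$, so verifying the Calderon--Zygmund estimates for the associated kernels is delicate; in parallel, using \eqref{612} together with the exponent restrictions to determine which solution of the degenerate ODE $v$ actually equals is the other subtle point. The surrounding localization and perturbation steps are routine.
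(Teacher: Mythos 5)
A point of comparison first: this paper does not actually prove the statement --- it is quoted verbatim as Lemma 5.4 of \cite{HH}, and the Appendix only proves Theorem \ref{thm601} by invoking Lemmas \ref{lem602}--\ref{lem604} as black boxes. So your proposal can only be judged against the strategy of the cited source, and at the level of architecture it does match it: localization by $\psi_r$, commutator terms controlled exactly by the norms $\|\psi_{2r}u\|_{L^p}$, $\|\psi_{2r}u_x\|_{L^p}$, $\|y\psi_{2r}u_y\|_{L^p}$ appearing in the constant, freezing of coefficients at the origin to the model operator $y\partial_{yy}+\Delta_x+b(0)\partial_y$, and absorption of the variable-coefficient remainder for small $r$ using continuity of the coefficients, the factor $y\le r$ in the mixed term, Riesz transforms and the interpolation $\|\Lambda_1 v\|_{L^p}\le C\|\Lambda_1^2v\|_{L^p}^{1/2}\|v\|_{L^p}^{1/2}$. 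Your identification of the indicial roots $0$ and $1-b(0)$ and of \eqref{612} as the condition excluding the singular branch is also correct.

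The genuine gap is that everything of substance is contained in the model estimate \eqref{modelest}, and you assert it rather than prove it. The $L^p$ boundedness of the maps $g\mapsto y\partial_{yy}w$, $g\mapsto\Lambda_1^2w$, $g\mapsto y^{1/2}\Lambda_1 w_y$, $g\mapsto w_y$ built from the Green's function of $y\partial_{yy}+b(0)\partial_y-|\xi|^2$ is not a routine Calder\'on--Zygmund verification: the kernel is anisotropic in $(x,y)$, degenerates at $y=0$, and the required cancellation between the weight $y$ (resp.\ $y^{1/2}$) and the boundary singularity of $G(\xi;y,s)$ is precisely the content of the long kernel analysis in \cite{HH}; a proof must actually exhibit and estimate these kernels. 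Relatedly, the hypotheses $p>n+1$, or $p>\frac{n+1}{2}$ together with $b(0)-2-\epsilon>0$, are only conjectured by you to enter through a Sobolev-embedding/identification step; nothing in your sketch verifies where they are needed, so you cannot claim the stated dependence on $p$ and $b(0)$. Finally, your absorption step divides by $I_p(\psi_ru)$, whose finiteness is exactly what is to be proved (the hypotheses only give $u,u_x,yu_y\in L^p$); this circularity must be broken either by an approximation argument or by deriving the bound directly from the representation formula after the uniqueness step, and that is not addressed. As it stands the proposal is a correct outline of the outer, routine layers with the analytic core left open.
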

\begin{lemma}\label{lem603}(Lemma 5.5 in \cite{HH})
Suppose that $w, \partial_xw,y\partial_yw \in\dot
{C}^{\alpha}_{loc}(\overline{R_+^{n+1}})\cap C^2(R_+^{n+1})$ with
$\alpha\in R_{+}^1 \backslash Z$ and $w$ satisfies
(\ref{611}),where $a_{ij}, a_j, b_j, b, f$ are all in
$\dot{C}^\alpha_{loc}(\overline{R_+^{n+1}})$ with
$a_{ij}(0)=\delta_{ij}, b(0)>\frac 32$.  Then
\begin{equation}\label{613}
I_{\alpha}(\psi_rw)\le C,
\end{equation}
for some positive constants $r$ and $C$,  depending on  $\alpha,
|\psi_{2r}f|_{\alpha}, |\psi_{2r}w |_{\alpha}$,  $
|\psi_{2r}\partial_xw |_{\alpha}$ and
$|y\psi_{2r}\partial_yw|_{\alpha}$.
\end{lemma}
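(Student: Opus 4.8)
The plan is to establish Lemma \ref{lem603} by a freezing-of-coefficients and perturbation argument, exactly parallel to the $L^p$ estimate of Lemma \ref{lem602}, the new input being a sharp a priori bound in the anisotropic H\"older scale for the model operator obtained by evaluating the coefficients at the origin. Throughout, I use the fact that $I_\alpha$ measures regularity only in the $x$-variables, so that $\Lambda_1$ (symbol $|\xi|$) acts purely in $x$ and may be treated by a Fourier multiplier analysis in $x$.

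First I would set $L_0 = y\partial_{yy} + \Delta_x + b(0)\partial_y$ and, for the localized function $W = \psi_r w$, write
$$L_0 W = \psi_r f - \sum_{i,j}(a_{ij}-\delta_{ij})\partial_{x_ix_j}W - y\sum_j a_j\partial_{yx_j}W - \sum_j b_j\partial_{x_j}W - (b-b(0))\partial_y W + [L_0,\psi_r]w.$$
The commutator $[L_0,\psi_r]w$ only involves $w$, $\partial_x w$ and $y\partial_y w$ multiplied by bounded functions supported where $\psi_{2r}=1$, so its $\dot C^\alpha$ norm is controlled by the quantities $|\psi_{2r}w|_\alpha$, $|\psi_{2r}\partial_x w|_\alpha$, $|y\psi_{2r}\partial_y w|_\alpha$ appearing on the right of \eqref{613}. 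The coefficient differences $a_{ij}-\delta_{ij}$ and $b-b(0)$ are continuous and vanish at $0$, so their $C^0$ norm on $\mathrm{supp}\,\psi_{2r}$ can be made as small as we wish by choosing $r$ small; this is what lets the two top-order perturbation terms $(a_{ij}-\delta_{ij})\partial_x^2 W$ and $(b-b(0))\partial_y W$ be absorbed into the left-hand side once the model estimate is available. The residual contributions of the form $[a_{ij}]_{\dot C^\alpha}\,\|\partial_x^2 W\|_{C^0}$ and $[b]_{\dot C^\alpha}\,\|\partial_y W\|_{C^0}$, together with the genuinely lower-order terms $y\sum a_j\partial_{yx_j}W$ and $\sum b_j\partial_{x_j}W$, are handled by interpolation and by the weighted $L^p$/Sobolev bounds supplied by Lemma \ref{lem602}.

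The core of the proof is the a priori estimate for the model operator: if $L_0 V = G$ with $V$ supported in a fixed ball, then $I_\alpha(V) \le C(|G|_{\dot C^\alpha} + \|V\|_{L^\infty})$. I would obtain it by taking the Fourier transform in $x$, which reduces $L_0 V = G$ to the ordinary differential equation $y\hat V'' + b(0)\hat V' - |\xi|^2\hat V = \hat G$ in $y$ for each frozen $\xi$. The substitution $z = 2\sqrt y$ used throughout the paper turns this into $\hat V'' + \tfrac{2b(0)-1}{z}\hat V' - |\xi|^2\hat V = \hat G$, i.e.\ the radial part in $z$ of a Laplacian, so that $L_0$ corresponds to an effective dimension $n+2b(0)$; here the hypothesis $b(0) > \tfrac32$ guarantees the effective dimension is large enough for the model to behave like the genuinely elliptic Laplacian, with the coefficient of $\hat V'$ positive and the Bessel index in the admissible range. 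From the explicit Bessel solution one reads off kernel bounds, and the homogeneity $y \sim z^2 \sim |\xi|^{-2}$ produces precisely the weights $y\partial_{yy}$, $\Lambda_1^2$, $y^{1/2}\Lambda_1\partial_y$, $\partial_y$ that occur in $I_\alpha$; passing from these multiplier/kernel estimates to H\"older estimates in $x$, uniformly in $y \ge 0$, via a Littlewood--Paley decomposition (or a Mikhlin--H\"ormander argument adapted to the $\dot C^\alpha$ scale) yields the model estimate.

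Putting the pieces together, I apply the model estimate to $W = \psi_r w$ in the displayed identity, fix $r$ small so that the top-order perturbations are absorbed, and close the estimate using Lemma \ref{lem602} and interpolation for the remaining terms. The main obstacle is the model estimate itself: one must handle the degeneracy of $L_0$ on $\{y=0\}$, where ellipticity is lost and no boundary condition is imposed, and in particular obtain the correct $y^{1/2}$-weighted bound for the mixed term $[y^{1/2}\Lambda_1 v_y]_{\dot C^\alpha}$, which is the quantity most sensitive to the boundary behavior and to the precise value of $b(0)$ --- the role of $b(0) > \tfrac32$ being exactly to keep the weighted estimates valid. The rest is routine, if somewhat lengthy, bookkeeping mirroring the proof of Lemma \ref{lem602}.
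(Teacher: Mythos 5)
First, a point of comparison: the paper itself does not prove this statement --- it is quoted as Lemma 5.5 of \cite{HH}, and the Appendix only uses it (together with Lemmas \ref{lem602} and \ref{lem604}) to give a brief proof of Theorem \ref{thm601} --- so there is no internal proof to measure you against. Judged on its own terms, your outline does follow the same general philosophy as the cited work: freeze the coefficients at the origin, prove a sharp estimate in the anisotropic H\"older scale for the model operator $y\partial_{yy}+\Delta_x+b(0)\partial_y$, and absorb the perturbation terms for small $r$. The localization identity, the commutator bookkeeping, and the smallness of $a_{ij}-\delta_{ij}$ and $b-b(0)$ on the support of $\psi_r$ are all fine.

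The genuine gap is that the entire content of the lemma --- the model estimate $I_\alpha(V)\le C(|G|_{\dot C^\alpha}+\|V\|_{L^\infty})$ --- is asserted rather than proved. Concretely: (i) after taking the Fourier transform in $x$ you must show that $\hat V(\xi,\cdot)$ actually coincides with the Bessel-kernel particular solution; since no boundary condition is imposed on the characteristic boundary $y=0$, this needs a uniqueness argument for the degenerate ODE (boundedness as $y\to0$ kills the homogeneous solution with indicial behavior $y^{1-b(0)}$, decay kills the growing one), and this is exactly the point where the threshold on $b(0)$ enters --- note that Lemma \ref{lem602} carries the extra hypothesis \eqref{612} precisely for this representation issue, and your sketch never explains why it can be dropped here, nor where $b(0)>\frac{3}{2}$ (as opposed to $b(0)>1$) is actually used in obtaining the weighted bounds on $y\partial_{yy}v$ and $y^{\frac{1}{2}}\Lambda_1 v_y$; saying the hypothesis ``keeps the weighted estimates valid'' is not an argument. (ii) Passing from fixed-$\xi$ kernel bounds to $\dot C^\alpha$ bounds in $x$, uniformly in $y\ge0$, for all four quantities in $I_\alpha$ is the bulk of the work in \cite{HH}; a one-line appeal to Littlewood--Paley/Mikhlin does not cover the degenerate weights, nor the case $\alpha>1$, which the lemma allows ($\alpha\in R^1_+\backslash Z$) and which requires differentiating the equation and inducting. (iii) A smaller but real slip: $y\,a_j\partial_{yx_j}W$ is a top-order term, and an $L^p$ bound from Lemma \ref{lem602} cannot control a H\"older seminorm; the correct mechanism is that $y^{\frac{1}{2}}\le(2r)^{\frac{1}{2}}$ is small on the support of $\psi_r$ while $[y^{\frac{1}{2}}\Lambda_1 v_y]_{\dot C^\alpha}$ is part of $I_\alpha$, so this term is absorbed like the other top-order perturbations. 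In short, the skeleton is the right one, but the heart of the lemma is missing.
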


Denote by $ W^{1,p}_{\alpha}(U)$ the completion of the space of all
the functions $u$ in $C^1(\bar U)$  under the norm
$$\Big(\int_{U}y^{p\alpha}|Du|^pdxdy+\int_{U}y^{p\alpha}|u|^pdxdy\Big)^{\frac 1{p}}.$$
Here we always assume $U\subset R_+^{n+1}$,  bounded and $\partial
U\cap \{y=0\}$ nonempty.
\begin{lemma}\label{lem604}(Lemma 8.3 in \cite{HH} Appendix B)
Let $U\in C^1$ be bounded domain and let $\alpha\in (0, 1)$. Then the following maps are continuous
\begin{eqnarray}
W^{1, p}_{\alpha}(U)&\hookrightarrow &C^{\beta}(\bar U)\text { where
}\beta=1-\alpha-\frac {n+1}p,  \text { if }
p>\frac {n+1}{(1-\alpha)},\label{A61}\\
W^{1, p}_{\alpha}(U)&\hookrightarrow &L^q(U)\text { where }q<\frac
{(n+1)p}{n+1-(1-\alpha)p}, \text { if } \frac1{1-\alpha}<p<\frac
{n+1}{(1-\alpha)}.\label{A71}
\end{eqnarray}
Moreover, for $p=2$ and $\forall \alpha\in (0,1)$, one can have
\begin{equation}\label{A72}
W^{1, 2}_{\alpha}(U)\hookrightarrow L^q(U)\text { where }q<\frac
{q_1}{1+2\alpha}\text { and }q_1=2+\frac 4n.
\end{equation}

\end{lemma}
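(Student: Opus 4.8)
The plan is to prove the three estimates for $u$ in the dense subspace $C^{1}(\bar U)$ and then pass to the completion (for $C^{\beta}$: once the estimate below is established, a $W^{1,p}_{\alpha}$–Cauchy sequence is $C^{\beta}$–Cauchy, and $C^{\beta}(\bar U)$ is complete; similarly for $L^{q}$). Since the weight $y^{p\alpha}$ degenerates only on $\{y=0\}$, I would first localize with a finite partition of unity subordinate to a cover of $\bar U$: on pieces whose closure misses $\{y=0\}$ the weight is bounded above and below by positive constants, so there $W^{1,p}_{\alpha}=W^{1,p}$ and the classical Sobolev–Morrey embeddings apply; near a point of $\bar U\cap\{y=0\}$ I would flatten $\partial U$ by a $C^{1}$ diffeomorphism leaving the $y$–coordinate essentially fixed, so that $y^{p\alpha}$ transforms into a comparable weight and matters reduce to a bounded Lipschitz model domain $U$ with $\bar U\cap\{y=0\}\neq\emptyset$, e.g. the half-cylinder $\{|x|<1\}\times(0,1)$. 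Writing $\eta(Q)$ for the $y$–coordinate of an integration point $Q$, I would use the Morrey representation valid on such domains: for $P\in U$ and a subdomain $C=C_{\rho}(P)\subset U$ of size $\rho$, star-shaped with respect to a ball of comparable size,
\begin{equation*}
\Bigl|\,u(P)-\tfrac{1}{|C|}\!\int_{C}u\,\Bigr|\;\le\;c\int_{C}\frac{|\nabla u(Q)|}{|P-Q|^{\,n}}\,dQ .
\end{equation*}

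For \eqref{A61} I would split $|\nabla u|=\eta^{-\alpha}\cdot(\eta^{\alpha}|\nabla u|)$ and apply Hölder with exponents $p',p$, giving
\begin{equation*}
\Bigl|\,u(P)-\tfrac{1}{|C_{\rho}(P)|}\!\int_{C_{\rho}(P)}u\,\Bigr|\;\le\;\Bigl(\int_{C_{\rho}(P)}\frac{\eta^{-\alpha p'}}{|P-Q|^{\,np'}}\,dQ\Bigr)^{1/p'}\|u\|_{W^{1,p}_{\alpha}(U)} .
\end{equation*}
The crux is that this auxiliary integral converges exactly when $p>\frac{n+1}{1-\alpha}$: its only singularity is at $Q=P$, and the worst case $P\in\{y=0\}$ gives, via $\eta\le|P-Q|$, the integrand $|P-Q|^{-(n+\alpha)p'}$, which is integrable near $P$ in $R^{n+1}$ iff $(n+\alpha)p'<n+1$, i.e. iff $p(1-\alpha)>n+1$. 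A scaling $Q\mapsto\rho Q$ then extracts the factor $\rho^{\,(n+1)/p'-n-\alpha}=\rho^{\,1-\alpha-(n+1)/p}=\rho^{\beta}$, with constant uniform in $\rho$ and in the height of $P$ — for $P$ at height $h\ge\rho$ the weight only improves the bound since $(\rho/h)^{\alpha}\le1$, and for $\rho\ge h$ one takes $C$ dipping down to $\{y=0\}$, returning to the worst case. Applied to two points with a common $C$ of size $\sim|P-P'|$ this yields $|u(P)-u(P')|\le c\,|P-P'|^{\beta}\|u\|_{W^{1,p}_{\alpha}(U)}$; the sup–norm is controlled the same way (taking $C=U$ and bounding the average, using $\alpha p'<1$ once more), which is \eqref{A61}.

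For \eqref{A71} and \eqref{A72} I would take $C=U$ in the representation, so $\bigl|u(P)-\tfrac1{|U|}\int_{U}u\bigr|\le c\,(Tg)(P)$ with $g:=\eta^{\alpha}|\nabla u|\in L^{p}(U)$, $\|g\|_{L^{p}}\le\|u\|_{W^{1,p}_{\alpha}(U)}$, and $(Tg)(P)=\int_{U}\eta(Q)^{-\alpha}|P-Q|^{-n}g(Q)\,dQ$, and then prove $T\colon L^{p}(U)\to L^{q}(U)$. This is a Stein–Weiss (weighted Hardy–Littlewood–Sobolev) bound for the Riesz kernel $|P-Q|^{-n}$ in $R^{n+1}$ with the one–sided weight $\eta(Q)^{-\alpha}$, $\eta(Q)=\operatorname{dist}(Q,\{y=0\})$ being a power of the distance to a codimension–one subspace — which is admissible precisely because $\eta^{-\alpha}$ is locally integrable, i.e. $\alpha<1$. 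The admissibility conditions of that inequality coincide with the stated range $\tfrac1{1-\alpha}<p<\tfrac{n+1}{1-\alpha}$ (the first inequality being $\alpha p'<1$, so the weight can be absorbed; the second being $n+1-(1-\alpha)p>0$), and matching homogeneities forces $q=q^{*}:=\frac{(n+1)p}{\,n+1-(1-\alpha)p\,}$ — which is just the Sobolev conjugate of $p$ in the effective dimension $N=\frac{n+1}{1-\alpha}$ — with the endpoint lost, whence \eqref{A71} for all $q<q^{*}$. The same machinery applies for $p=2$ and every $\alpha\in(0,1)$, even when $N\le2$; the bookkeeping yields the non–sharp but $\alpha$–uniform exponent $q<\frac{q_{1}}{1+2\alpha}$, $q_{1}=2+\frac4n$. (Alternatively \eqref{A72} follows by pairing the $L^{2}$ potential estimate with the one–dimensional Hardy inequality in $y$ and the Gagliardo–Nirenberg inequality in $x$, which is what produces the parabolic-type exponent $2+\frac4n$.)

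I expect the main obstacle to be twofold. First, the localization and flattening must genuinely preserve the weight up to constants: this is immediate where $\partial U$ meets $\{y=0\}$ transversally, where one can take the flattening map to fix $y$ near $\{y=0\}$, but tangential contact of $\partial U$ with $\{y=0\}$, if allowed by ``$U\in C^{1}$'', requires a short separate argument. Second — and this is the analytic heart of the $L^{q}$ part — one needs the Stein–Weiss inequality with a weight that is a power of the distance to a hyperplane rather than to a point, and must check carefully that its exponent and integrability conditions reproduce exactly the stated ranges for $p$ and $q$, the endpoint being precisely what fails. By contrast the $C^{\beta}$ estimate is elementary once the convergence of the auxiliary integral is pinned to $p>\frac{n+1}{1-\alpha}$.
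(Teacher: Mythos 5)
The paper never proves this lemma: it is imported verbatim from \cite{HH} (Lemma 8.3, Appendix B), so there is no in-paper argument to compare with, and I can only judge your proposal on its own terms. Your Morrey-representation scheme for \eqref{A61} is sound in outline, but the one-line convergence argument is stated backwards: for $P$ on $\{y=0\}$ the inequality $\eta(Q)\le|P-Q|$ gives $\eta(Q)^{-\alpha p'}\ge |P-Q|^{-\alpha p'}$, a lower bound on the integrand, not the upper bound you use. The conclusion survives, because integrating first over the intersection of the sphere $\{|P-Q|=r\}$ with $\{y>0\}$ (using $\alpha p'<1$, which is implied by $(n+\alpha)p'<n+1$) gives $\int_{C_\rho(P)}\eta^{-\alpha p'}|P-Q|^{-np'}\,dQ\sim\int_0^\rho r^{\,n-\alpha p'-np'}\,dr$, finite exactly when $(n+\alpha)p'<n+1$, i.e. $p(1-\alpha)>n+1$, with the scaling factor $\rho^{(n+1)/p'-n-\alpha}=\rho^{\beta}$ you claim; so this part is a repairable slip rather than a gap.

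The genuine gap is in \eqref{A71}--\eqref{A72}, which you reduce to the bound $L^p\to L^q$ for the operator with kernel $\eta(Q)^{-\alpha}|P-Q|^{-n}$ and then dispatch by invoking ``Stein--Weiss''. The classical Stein--Weiss inequality concerns power weights of the distance to a point; its analogue for distance-to-hyperplane weights is precisely the inequality you need, and you neither prove it nor quote a precise statement, while the obvious shortcuts fail: a symmetric Schur/Young test breaks down because $\sup_Q\int_U\eta(Q)^{-\alpha r}|P-Q|^{-nr}\,dP$ blows up as $\eta(Q)\to0$, and the pointwise splitting $\eta(Q)\gtrless|P-Q|/2$ only recovers the Morrey range $p>\frac{n+1}{1-\alpha}$, not the subcritical range of \eqref{A71}. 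Worse, for \eqref{A72} your own admissibility condition $\alpha p'<1$ becomes $2\alpha<1$ at $p=2$, so the claim that ``the same machinery applies for every $\alpha\in(0,1)$'' is false for $\alpha\ge\frac12$: there $\eta^{-\alpha}\notin L^{2}_{loc}$ near $\{y=0\}$, the Hölder absorption of the weight fails already in controlling the average $\frac1{|U|}\int_U u$, and the parenthetical alternative (one-dimensional Hardy in $y$ combined with Gagliardo--Nirenberg in $x$) --- which is indeed the type of argument that produces the exponent $q<\frac{q_1}{1+2\alpha}$, $q_1=2+\frac4n$ --- is left entirely without the bookkeeping that constitutes the actual content of \eqref{A72}. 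As written, the $L^q$ half of the lemma is asserted rather than proved.
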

With the above three lemmas, we can establish the following theorem concerning the regularity of solutions to  degenerate elliptic equation \eqref{611} for $n=1$ and $p=2$.
\begin{theorem}\label{thm601} Suppose that $b(0)>2$ and $u\in
C^2(R_+^{2})\bigcap L^\infty(R_+^{2})$ with $u_x,
yu_y\in L^2(R_+^{2})$ satisfies \eqref{611}. Then \begin{itemize}
\item[(1)] Suppose $a_{ij}, a_j, b_j, b\in C(\overline{R_+^{2}})$ and $f\in L^\infty(R^{2}_+)$. Then there exist two constants $r>0$ and $\beta\in (0,1)$ such that \begin{equation}\label{501}
    \|\psi_ru\|_{C^\beta(\overline{R^{2}_+})}+ \|\psi_ru_x\|_{C^\beta(\overline{R^{2}_+})}+ \|y\psi_ru_y\|_{C^\beta(\overline{R^{2}_+})}\leq C_r,
    \end{equation}for some constant depending only on
$\|\psi_{2r}f\|_{L^\infty}, \|\psi_{2r}u\|_{L^\infty},
\|\psi_{2r}u_x\|_{L^2}$ and $\|y\psi_{2r}u_y\|_{L^2}$.
\item[(2)] Suppose $a_{ij}, a_j, b_j, b,f\in \dot{C}^{k+\beta}(\overline{R_+^{2}}),k\geq 0$. Then there  exist two constants $r>0$ and $\beta\in (0,1)$  such that \begin{equation}\label{502}
    I_{k+\beta}(\psi_ru)\leq C_r,
    \end{equation}for some constant depending only on
$\|\psi_{2r}f\|_{C^k}, \|\psi_{2r}u\|_{L^\infty},
\|\psi_{2r}u_x\|_{L^2}$, $\|y\psi_{2r}u_y\|_{L^2}$ and the $C^k$-norm of the coefficients.
\end{itemize}
\end{theorem}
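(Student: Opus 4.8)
The plan is to bootstrap from the weighted-$L^2$ setting to Hölder regularity by iterating the three lemmas already stated, first freezing coefficients and then exploiting the scaling structure of the degenerate operator. I would begin with part (1). The hypotheses give $u_x,yu_y\in L^2$, so by Lemma~\ref{lem402}-type energy estimates (here the weighted estimate is essentially Lemma~\ref{lem604} combined with testing the equation against $\psi_{r}^2 u$) one upgrades to $y^{1/2}u_y,u_x\in L^2_{\mathrm{loc}}$ with weight; more precisely the goal is to show $\psi_r u\in W^{1,2}_{\alpha}$ for a suitable $\alpha\in(0,1)$. The link between the equation $L(u)=f$ and membership in $W^{1,2}_\alpha$ comes from rewriting $L$ in divergence-type form near $y=0$: since $b(0)>2$, the weight $y^{b(0)-1}$ makes the first-order term $bu_y$ absorbable, and testing against $y^{-\epsilon}\psi_r^2 u$ yields $\int y^{-\epsilon}\psi_r^2(y u_y^2+u_x^2)\le C$, i.e.\ control in $W^{1,2}_{\alpha}$ with $\alpha=\epsilon/2$ chosen small. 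Then the embedding \eqref{A72} of Lemma~\ref{lem604} gives $\psi_r u\in L^q$ for some $q>2$; feeding this improved integrability back into Lemma~\ref{lem602} (which requires $p>n+1=2$, or $p>\tfrac{n+1}2=1$ together with $b(0)-2-\epsilon>0$, both attainable here because $b(0)>2$) produces $I_p(\psi_{r'}u)\le C_{r'}$ for some $p>2$. Finally, with $n=1$ and $p>2$, the embedding \eqref{A61} (applied to $yu_{yy}$, $\Lambda_1^2 u$, $y^{1/2}\Lambda_1 u_y$, $u_y$, $u$ via the definition of $I_p$) yields the Hölder bound \eqref{501} with $\beta=1-\tfrac{2}{p}$ after adjusting $\alpha$; shrinking $r$ along the way absorbs the cutoff-derivative terms into the stated right-hand side.

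For part (2) I would run the classical Schauder bootstrap, now in the $\dot C^{k+\beta}$ scale adapted to the degenerate operator. The base case $k=0$: having \eqref{501} in hand, the coefficients $a_{ij},a_j,b_j,b$ and $f$ are in $\dot C^{\beta}$, so Lemma~\ref{lem603} applies directly with $\alpha=\beta$ and delivers $I_{\beta}(\psi_r u)\le C$, which is \eqref{502} for $k=0$. The inductive step assumes $I_{k-1+\beta}(\psi_{r}u)\le C$; differentiating the equation \eqref{611} in the tangential variable $x$ gives an equation of the same form for $\partial_x u$, whose coefficients lie in $\dot C^{k-1+\beta}$ and whose right-hand side (the commutator terms $(\partial_x a_{ij})u_{x_ix_j}$ etc.) lies in $\dot C^{k-1+\beta}$ by the inductive hypothesis together with the already-established bounds on lower-order quantities. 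Applying Lemma~\ref{lem603} to $\partial_x u$ and combining with the equation itself to solve for $y\partial_{yy}u$ and the weighted normal derivatives recovers control of all the norms in $I_{k+\beta}$; one must also differentiate in $y$ using the relation $yu_{yy}=f-\sum a_{ij}u_{x_ix_j}-\cdots$ to trade a $y$-derivative for the already-controlled expression on the right, which is the usual mechanism for handling the characteristic direction. Shrinking $r$ at each step keeps the constants under control.

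The main obstacle I expect is the passage from the raw hypotheses ($u_x,yu_y\in L^2$, $u\in L^\infty$) to the weighted Sobolev membership $\psi_r u\in W^{1,2}_\alpha$ that unlocks Lemma~\ref{lem604}: the natural energy identity produces $\int\psi_r^2\,y\,u_y^2$, not $\int\psi_r^2\,y^{2\alpha}u_y^2$ with $2\alpha<1$, so one needs the condition $b(0)>2$ in an essential way — testing against $y^{-\epsilon}\psi_r^2 u$ generates a good term $\epsilon\int y^{-1-\epsilon}\psi_r^2 u^2$ only after an integration by parts that uses $b(0)-1>1$ to keep the boundary contribution at $y=0$ nonpositive (this is where condition \eqref{612}, or its $L^\infty$ analogue, is needed to justify the integration by parts and discard the boundary term). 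Getting the signs and the admissible range of $\epsilon$ right, so that simultaneously $2\alpha=\epsilon<1$ and the improved exponent $q$ from \eqref{A72} exceeds $2$ and then the hypothesis of Lemma~\ref{lem602} ($b(0)-2-\epsilon>0$) still holds, is the delicate bookkeeping step; everything afterward is a routine iteration of the cited lemmas.
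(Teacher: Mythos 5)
Your part (2) is essentially the paper's argument (a Schauder-type induction on $k$ using Lemma \ref{lem603}, differentiating tangentially and recovering the normal derivatives from the equation), and your overall toolkit for part (1) — iterate Lemma \ref{lem602} and the embeddings of Lemma \ref{lem604} — is also the paper's. But the execution of part (1) has a genuine gap. The quantity whose integrability must be upgraded is \emph{not} $u$ (which is already in $L^\infty$, so the $L^q$ improvement you extract from \eqref{A72} is vacuous) but $u_x$ and $yu_y$: Lemma \ref{lem602} at exponent $p$ requires $\|\psi_{2r}u_x\|_{L^p}$ and $\|y\psi_{2r}u_y\|_{L^p}$ to be finite, and you never establish these beyond $p=2$. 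The paper does this by reading off from $I_2(\psi_r u)\le C_r$ that $\psi_r u_x\in W^{1,2}_{1/2}$ (via $\Lambda_1^2 u$ and $y^{1/2}\Lambda_1 u_y$) and that $y\psi_r u\in H^2$, then applying \eqref{A72} to get $\psi_r u_x\in L^{p_1}$, $p_1<3$, and Sobolev to get $y\psi_r u_y\in L^p$ for all $p$; only then can Lemma \ref{lem602} be reapplied at $p_1>2$. So your step ``feeding this improved integrability back into Lemma \ref{lem602} produces $I_p\le C$ for some $p>2$'' does not follow as written.

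Second, one pass to ``some $p>2$'' is not enough to reach \eqref{501}. The Hölder bound on $\psi_r u_x$ comes from $\psi_r u_x\in W^{1,p}_{1/2}$ (the mixed derivative is only controlled with the weight $y^{1/2}$), so you must use \eqref{A61} with $\alpha=\tfrac12$, which requires $p>\frac{n+1}{1-\alpha}=4$ and yields $\beta=\tfrac12-\tfrac2p$, not $1-\tfrac2p$ as you claim. Hence the iteration must be run at least twice after the initial $p=2$ step ($2\to p_1\in(2,3)\to p_2$ up to $12$; the paper takes $p_1=\tfrac52$, $p_2=\tfrac{20}{3}$, giving $\beta=\tfrac15$). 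Finally, your preliminary $y^{-\epsilon}$-weighted energy estimate is both unnecessary and mis-bookkept: the hypotheses $u_x,yu_y\in L^2$ already allow Lemma \ref{lem602} at $p=2$ — this is exactly where $b(0)>2$ enters, through the branch $p>\tfrac{n+1}2$ with $b(0)-2-\epsilon>0$, while \eqref{612} is automatic from $u\in L^\infty$ — and the bound $\int y^{1-\epsilon}\psi_r^2u_y^2\le C$ corresponds to the weight $\alpha\approx\tfrac12$, not $\alpha=\epsilon/2$, so it cannot produce the small-weight space $W^{1,2}_{\epsilon/2}$ you want to feed into \eqref{A72}.
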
\begin{proof}
We first prove \eqref{501}. By Lemma \ref{lem602}, one can get $$
I_2(\psi_ru)\leq C_r,i.e.,y\psi_ru\in H^2(R^2_+),y^{\frac 12}\psi_ru_{xy}\in L^2(R^2_+).$$Hence by Sobolev embedding theorem, it follows that $y\psi_ru_y\in L^p(R^2_+),\forall p\in [2,\infty)$.
By noting $\psi_ru_x\in W^{1,2}_{\frac 12}(R^2_+) $ and \eqref{A72}(where $n=1$), we can see that $\psi_ru_x\in L^{p_1}(R^2_+),\forall p_1\in [2,3)$.
Now we can apply Lemma \ref{lem602} again for $p_1$ and another smaller $r_1$(for simplicity  we always denote it by $r$) to get $$
I_{p_1}(\psi_{r}u)\leq C_{r},i.e., \psi_{r}u_x\in W^{1,p_1}_{\frac 12}(R^2_+),\psi_{r}u_x,\psi_{r}u_y\in L^{p_1}(R^2_+).$$
This implies that $\psi_{r}u\in W^{1,p_1}(R^2_+)$. Then $\psi_{r}u\in C^{\frac 15}(\overline{R^{2}_+})$ if we take $p_1=\frac 52$.  Using Lemma \ref{lem604} again, one can get $\psi_{r}u_x\in L^{p_2}(R^2_+),\forall p_2\in [2,12)$.
Again, by Lemma \ref{lem602}, one can get$$I_{p_2}(\psi_{r}u)\leq C_{r}, i.e.,\psi_{r}u_x\in W^{1,p_2}_{\frac 12}(R^2_+).$$ By Lemma \ref{lem604}, we can get\begin{equation}\label{aaa}\|\psi_{r_2}u\|_{C^{\frac 15}(\overline{R^2_+})}+\|\psi_{r}u_x\|_{C^{\frac 15}(\overline{R^2_+})}+\|y\psi_{r}u_y\|_{C^{\frac 15}(\overline{R^2_+})}\leq C_{r},\text{ if } p_2=\frac{20}{3}.\end{equation}This proves \eqref{501}.
Now we can prove \eqref{502} by induction on $k$. For $k=0$,
\eqref{aaa} means we can apply Lemma 5.2 to get
$$\|\psi_{r}yu_{yy}\|_{\dot C^{\frac 15}(\overline{R^2_+})}+\|\psi_{r}u_{xx}\|_{\dot C^{\frac 15}(\overline{R^2_+})}+\|y^{\frac 12}\psi_{r}u_{xy}\|_{\dot C^{\frac 15}(\overline{R^2_+})}+\|\psi_{r}u_y\|_{\dot C^{\frac 15}(\overline{R^2_+})}\leq C_{r}$$  For $k=1$, as $\psi_{r}\partial_x{u_x},y^{\frac 12}\psi_{r}\partial_y{u_x}\in C^{\frac 15}(\overline{R^2_+})$,  we can continue to apply Lemma 5.2 to $\psi_r u_x$ again to get $I_{\frac 15}(\psi_ru_x)\leq C_r$ namely,
$$\|\psi_{r}y\partial_{yy}(u_x)\|_{\dot C^{\frac 15}(\overline{R^2_+})}+\|\psi_{r}\partial_{xx}(u_{x})\|_{\dot C^{\frac 15}(\overline{R^2_+})}+\|\psi_{r}\partial_y(u_x)\|_{\dot C^{\frac 15}(\overline{R^2_+})}\leq C_{r}$$
This means $\psi_r \partial_x(u_{y})\in \dot C^{\frac 15}(\overline{R^2_+})$. Combining with $\psi_{r}y\partial_y(u_{y})\in \dot C^{\frac 15}(\overline{R^2_+})$ and applying Lemma 5.2 to $\psi_r u_y$ again, we can see that $I_{\frac 15}(\psi_ru_y)\leq C_r$, namely,$$\|\psi_{r}y\partial_{yy}(u_y)\|_{\dot C^{\frac 15}(\overline{R^2_+})}+\|\psi_{r}\partial_{xx}(u_{y})\|_{\dot C^{\frac 15}(\overline{R^2_+})}+\|\psi_{r}\partial_y(u_y)\|_{\dot C^{\frac 15}(\overline{R^2_+})}\leq C_{r}
$$ Also this implies that $u_{yy}\in C(\overline{R^2_+})$. For general $k$, repeat the above steps, we can get \eqref{502}.
\end{proof}

 {\section*{{\normalsize \bf  Acknowledgement}}} This work is  supported by a training program for
innovative talents of key disciplines, Fudan University. The author would like to thank the valuable suggestions of Professor J.X.Hong and Professor C.M.Li.





\end{document}